\theoremstyle{plain} 
\newtheorem*{thm'}{Theorem}                    
\newtheorem{thm}{Theorem}[section]
\newtheorem{prop}[thm]{Proposition}
\newtheorem{lemme}[thm]{Lemma}
\theoremstyle{remark}
\theoremstyle{definition}
\newtheorem{paragr}[thm]{}
\newtheorem{subparagr}{}[thm]
\newcommand{\ie}{\emph{i.e.}}
\newcommand{\cf}{\emph{cf.}}
\newcommand{\smsp}{\ }
\newcommand{\hhbox}[1]{\qquad\hbox{#1}\qquad}
\newcommand{\ssmash}[1]{\smash{#1}\vrule depth 5pt width 0pt}
\newcommand\DeclareMathOperatorSf[1]{%
  \expandafter\DeclareMathOperator\csname #1\endcsname{\mathsf{#1}}}
\newcommand\id[1]{{1^{}_{#1}}}
\newcommand{\Ens}{\mathsf{Set}}
\newcommand{\Top}{\mathsf{Top}}
\newcommand{\Hot}{\mathsf{Hot}}
\newcommand{\Cat}{\mathsf{Cat}}
\newcommand{\op}{\mathrm{op}}
\newcommand{\Ar}[1]{\mathcal{A}r(#1)}
\newcommand{\pref}[1]{\widehat{#1}}
\newcommand{\To}[1]{{\hskip -2.5pt\xymatrixcolsep{1pc}\xymatrix{\ar[r]^{#1}&}\hskip -2.5pt}}
\newcommand{\tosim}{\To{\sim}}
\newcommand{\Toto}[2]{{\hskip -2.5pt\xymatrixcolsep{#1pc}\xymatrix{\ar[r]^{#2}&}\hskip -2.5pt} }
\renewcommand{\to}{\Toto{.7}{}}
\newcommand{\toto}{\Toto{1.3}{}}
\newcommand{\mathUniv}[1]{\mathbf{#1}}
\newcommand{\mathUnivGr}[1]{\mathchoice{\mbox{\boldmath $#1$}}{\mbox{\boldmath $#1$}}{\mbox{\boldmath $\scriptstyle #1$}}{\mbox{\boldmath $\scriptscriptstyle #1$}}}
\newcommand{\Glob}{\mathbb{G}}
\newcommand{\Disk}{D}
\newcommand{\cosource}{\sigma}
\newcommand{\cotarget}{\tau}
\newcommand{\Dn}[1]{\mathUniv{\Disk}_{#1}}
\newcommand{\Tht}[2][\@empty]{%
  \ifx\@empty#1
  \mathUnivGr{\cotarget}_{#2}^{}%
  \else
    \mathUnivGr{\cotarget}_{#1}^{#2}%
  \fi
}
\newcommand{\Ths}[2][\@empty]{%
  \ifx\@empty#1
    \mathUnivGr{\cosource}_{#2}^{}%
  \else
    \mathUnivGr{\cosource}_{#1}^{#2}%
  \fi
}
\newcommand{\C}{C}
\newcommand{\ImD}[1]{\Disk_{#1}}
\newcommand{\Imt}[2][\@empty]{%
  \ifx\@empty#1
    \cotarget_{#2}^{}%
  \else
    \cotarget_{#1}^{#2}%
  \fi
}
\newcommand{\Ims}[2][\@empty]{%
  \ifx\@empty#1
    \cosource_{#2}^{}%
  \else
    \cosource_{#1}^{#2}%
  \fi
}
\newcommand{\Thetazero}{\mathUnivGr{\Theta}_0}
\newcommand{\Gr}{$\mathsf{Gr}$}
\newcommand{\Coh}{\mathUniv{C}}
\newcommand{\Cohcan}{\Coh_{\mathrm{can}}}
\newcommand{\CohBL}{\Coh_{\mathrm{BL}}}
\newcommand{\Cohred}{\Coh_{\mathrm{red}}}
\newcommand{\Adm}{E}
\newcommand{\Grp}{G}
\newcommand{\GrpFl}[1]{\Grp_{#1}}
\newcommand{\GrpFlQ}[1]{\overline{\Grp}_{#1}}
\newcommand{\GrpCat}[1]{\mathsf{Gr}_\infty^{#1}}
\newcommand{\source}{s}
\newcommand{\target}{t}
\newcommand{\Grpt}[2][\@empty]{%
  \ifx\@empty#1
  \target_{#2}^{}%
  \else
    \target_{#1}^{#2}%
  \fi
}
\newcommand{\Grps}[2][\@empty]{%
  \ifx\@empty#1
    \source_{#2}^{}%
  \else
    \source_{#1}^{#2}%
  \fi
}
\newcommand{\can}[1]{\mathUniv{can}_{#1}^{}}
\newcommand{\comult}{\mathUnivGr{\nabla}}
\newcommand{\comultbin}[2][\@empty]{%
  \ifx\@empty#1
    \comult_{}^{#2}%
  \else
    \comult_{#1}^{#2}%
  \fi
}
\newcommand{\comultbinprim}[2][\@empty]{%
  \ifx\@empty#1
    \comult'{}_{}^{#2}%
  \else
    \comult'{}_{#1}^{#2}%
  \fi
}
\def\comp{\mathop{\ast}}
\def\compprim{\mathop{\ast'}}
\newcommand{\multbin}[4][\@empty]{%
  \ifx\@empty#1
    #3 \comp\limits^{#2} #4%
  \else
    #3 \comp\limits_{#1}^{#2} #4%
  \fi
}
\newcommand{\multbinprim}[4][\@empty]{%
  \ifx\@empty#1
    #3 \compprim\limits^{#2} #4%
  \else
    #3 \compprim\limits_{#1}^{#2} #4%
  \fi
}
\newcommand{\coassoc}{\mathUnivGr{\alpha}}
\newcommand{\coass}[2][\@empty]{%
  \ifx\@empty#1
    \coassoc_{}^{#2}%
  \else
    \coassoc_{#1}^{#2}%
  \fi
}
\newcommand{\assoc}{a}
\newcommand{\ass}[2][\@empty]{%
  \ifx\@empty#1
    \assoc_{}^{#2}%
  \else
    \assoc_{#1}^{#2}%
  \fi
}
\newcommand{\counit}[1]{\mathUnivGr{\kappa}^{}_{#1}}
\newcommand{\unit}[1]{k^{}_{#1}}
\newcommand{\varunit}[1]{\mathsf{id}_{#1}}
\newcommand{\counitlcontr}[1]{\mathUnivGr{\lambda}^{}_{#1}}
\newcommand{\counitrcontr}[1]{\mathUnivGr{\rho}^{}_{#1}}
\newcommand{\unitlcontr}[1]{l^{}_{#1}}
\newcommand{\unitrcontr}[1]{r^{}_{#1}}
\newcommand{\coinverse}{\mathUnivGr{\omega}}
\newcommand{\coinv}[2][\@empty]{%
  \ifx\@empty#1
    \coinverse_{}^{#2}%
  \else
    \coinverse_{#1}^{#2}%
  \fi
}
\newcommand{\inverse}{w}
\newcommand{\inv}[2][\@empty]{%
  \ifx\@empty#1
    \inverse_{}^{#2}%
  \else
    \inverse_{#1}^{#2}%
  \fi
}
 \newcommand{\varinv}[1]{#1^{-1}}
\newcommand{\coinvlcontr}[1]{\mathUnivGr{\gamma}^{}_{#1}}
\newcommand{\coinvrcontr}[1]{\mathUnivGr{\delta}^{}_{#1}}
\newcommand{\invlcontr}[1]{g^{}_{#1}}
\newcommand{\invrcontr}[1]{d^{}_{#1}}
\newcommand{\hmtp}[1]{\sim_{#1}}
\newcommand{\Pizero}[1]{\Pi_0(#1)}
\newcommand{\GrptQ}[1]{\overline{\target}_{#1}}
\newcommand{\GrpsQ}[1]{\overline{\source}_{#1}}
\newcommand{\GrpMor}{f}
\newcommand{\TopD}[1]{\Disk_{#1}}
\newcommand{\Topt}[2][\@empty]{%
  \ifx\@empty#1
    \cotarget_{#2}^{}%
  \else
    \cotarget_{#1}^{#2}%
  \fi
}
\newcommand{\Tops}[2][\@empty]{%
  \ifx\@empty#1
    \cosource_{#2}^{}%
  \else
    \cosource_{#1}^{#2}%
  \fi
}
\newcommand{\Sphere}[1]{S_{#1}}
\newcommand{\bordD}[1]{\partial\TopD{#1}}
\newcommand{\infGrpf}{\Pi_\infty}
\newcommand{\EspClass}{B}
\newcommand{\CC}{\mathcal{C}}
\newcommand{\Classarr}{\mathcal{F}}
\newcommand{\LClassarr}{\mathcal{A}}
\newcommand{\RClassarr}{\mathcal{B}}
\newcommand{\Setarr}{I}
\newcommand{\wellord}{J}
\newcommand{\filt}{J}
\newcommand{\llp}[1]{l(#1)}
\newcommand{\rlp}[1]{r(#1)}
\newcommand{\cof}[1]{\mathsf{cof}(#1)}
\newcommand{\Cell}{\mathsf{cell}}
\newcommand{\cell}[2][\@empty]{%
  \ifx\@empty#1
    \Cell(#2)%
  \else
    \Cell_{#1}(#2)%
  \fi
}
\newcommand{\mc}[2]{\mathchoice {\raise -1.8pt\vbox{\hbox{$#1\backslash$}}#2} {\raise -1.8pt\vbox{\hbox{$#1\backslash$}}#2} {\raise -1.8pt\vbox{\hbox{$\scriptstyle#1\backslash$}}#2} {\raise -1.8pt\vbox{\hbox{$\scriptscriptstyle#1\backslash$}}#2} }
\newcommand{\CatExtGl}{\mc{\Thetazero}{\mathsf{Gl\,\hbox{\textrm{-}}\,Ext}}}
\newcommand{\GlobUniv}[1]{\overline{#1}}
\newcommand{\Tree}{\mathUniv{X}}
\newcommand{\SourceI}[2]{\Thetazero[#1,#2]}
\newcommand{\ButI}[2]{\Thetazero[#1,#2]'}
\newcommand{\SetArr}{I}
\newcommand{\Thbord}{\mathUnivGr{\partial}}
\newcommand{\Thbordind}[1]{\overline{#1\kern -3pt}\kern 3pt}
\newcommand{\Thbords}[1]{\mathUnivGr{\epsilon}^{}_{#1}}
\newcommand{\Thbordt}[1]{\mathUnivGr{\eta}^{}_{#1}}
\newcommand{\Ctg}{$\mathsf{Cat}$}
\newcommand{\CohCat}{\mathUniv{C}}
\newcommand{\Catg}{F}
\newcommand{\imds}[1]{\mathsf{imds}(#1)}
\newcommand{\Square}{\mathcal{D}}
\newcommand{\minwellord}{0}
\newcommand{\wo}{j}
\newcommand{\smwo}{{j'}}
\newcommand{\bgwo}{j}
\newcommand{\Succ}[1]{#1+1}
\renewcommand{\hookrightarrow}{{\hskip -1.5pt\raise 1.5pt\vbox{\xymatrixcolsep{.9pc}\xymatrix{\ar@{^{(}->}[r]&}}\hskip -3.5pt}}
\renewcommand{\longmapsto}{{\hskip -2.5pt\xymatrixcolsep{1.3pc}\xymatrix{\ar@{|->}[r]&}\hskip -2.5pt}}
\renewcommand{\mapsto}{{\hskip -2.5pt\xymatrixcolsep{.8pc}\xymatrix{\ar@{|->}[r]&}\hskip -2.5pt}}
\title[Grothendieck $\infty$-groupoids, and a definition of lax $\infty$-categories]{{\Large Grothendieck $\infty$-groupoids,} \break 
\vskip -6pt
and still another definition of $\infty$-categories}
\author{Georges Maltsiniotis}
\address{Institut de Math\'ematiques de Jussieu\\
Universit\'e Paris 7 Denis Diderot\\
\hfill\break\indent Case Postale~7012\\
B\^atiment Chevaleret\\
F-75205, Paris Cedex 13\\
FRANCE}
\email{maltsin@math.jussieu.fr}
\urladdr{http://www.math.jussieu.fr/\raise -3.3pt\vbox{\hbox{$\widetilde{ \ }\,$}}maltsin/}
\subjclass[2000]{18A30, 18B40, \textbf{18C10}, 18C30, 18C35, \textbf{18D05}, 18D50, 18E35, 18G50, \textbf{18G55}, 55P10, \textbf{55P15}, \textbf{55Q05}, 55U35, 55U40}
\keywords{$\infty$-groupoid, $\infty$-category, homotopy type, weak factorization system}
\begin{document}

\begin{abstract}
The aim of this paper is to present a simplified version of the notion of $\infty$-groupoid developed by Grothendieck in ``Pursuing Stacks'' and to introduce a definition of $\infty$-categories inspired by Grothendieck's approach.
\end{abstract}

\maketitle

\section*{Introduction}

The precise definition of Grothendieck $\infty$\nobreakdash-groupoids~\cite[sections 1-13]{PS} has been presented in~\cite{Ma1}. In this paper, we give a slightly simplified version of this notion, and a variant leading to a definition of (weak) $\infty$\nobreakdash-categories~\cite{Ma2}, very close to Batanin's operadic definition~\cite{Bat}. The precise relationship between these two notions is investigated by Ara in~\cite{Ara}.
\medbreak

The basic intuition leading to the definition of a $\infty$-groupoid is presented as follows by Grothendieck (for a $\infty$\nobreakdash-groupoid $F$, with set of $i$\nobreakdash-cells $F_i$): ``\emph{Intuitively, it means that whenever we have \emph{two} ways of associating to a finite family $(u_i)_{i \in I}$ of objects of an $\infty$\nobreakdash-groupoid, $u_i \in F_{n(i)}$, subjected to a ``standard'' set of relations on the $u_i$'s, an element of some $F_n$, in terms of the $\infty$\nobreakdash-groupoid structure only, then we have automatically a ``homotopy'' between these built-in in the very structure of the $\infty$\nobreakdash-groupoid, provided it makes at all sense to ask for one}\,\dots''~\cite[section 9]{PS}. This leads him to the notion of \emph{coherator}, category $\Coh$ endowed with a ``universal $\infty$\nobreakdash-cogroupoid'', a $\infty$\nobreakdash-groupoid being a presheaf on $\Coh$ satisfying some left exactness conditions, improperly called Segal conditions in the literature. In particular, Grothendieck $\infty$\nobreakdash-groupoids define an algebraic structure species, and the category of $\infty$\nobreakdash-groupoids is locally presentable.
\medbreak

The notion of a $\infty$\nobreakdash-groupoid depends on the choice of a coherator. Two different coherators give rise in general to non-equivalent categories of $\infty$\nobreakdash-groupoids. Nevertheless, the two notions of $\infty$\nobreakdash-groupoid are expected to be equivalent in some subtler way. Grothendieck illustrates this fact as follows: ``\emph{Roughly saying, two different 
mathematicians, working independently on the conceptual problem I had in 
mind, assuming they both wind up with some explicit definition, will 
almost certainly get non-equivalent definitions -- namely with non-equivalent 
categories of \emph{(}set-valued, say\emph{)} $\infty$\nobreakdash-groupoids! And, 
secondly and as importantly, that \emph{this ambiguity however is an 
irrelevant one}. To make this point a little clearer, I could say that a 
third mathematician, informed of the work of both, will readily think out 
a functor or rather a pair of functors, associating to any structure of 
Mr.~X one of Mr.~Y and conversely, in such a way that by composition of 
the two, we will associate to a $X$\nobreakdash-structure \emph{(}$T$ say\emph{)} another $T'$, 
which will not be isomorphic to $T$ of course, but endowed with a 
canonical $\infty$\nobreakdash-equivalence \emph{(}in the sense of Mr.~X\emph{)} $T 
\underset{\infty}{\simeq} T'$, and the same on the Mr.~Y side. Most 
probably, a fourth mathematician, faced with the same situation as the 
third, will get his own pair of functors to reconcile Mr.~X and Mr.~Y, 
which very probably won't be equivalent \emph{(}I mean isomorphic\emph{)} to the previous 
one. Here however a fifth mathematician, informed about this new 
perplexity, will probably show that the two $Y$\nobreakdash-structures $U$ and $U'$, 
associated by his two colleagues to an $X$\nobreakdash-structure $T$, while not 
isomorphic, also admit however a canonical $\infty$\nobreakdash-equivalence between $U$ 
and $U'$ \emph{(}in the sense of the $Y$\nobreakdash-theory\emph{)}. I could go on with a sixth 
mathematician, confronted with the same perplexity as the previous one, who 
winds up with another $\infty$\nobreakdash-equivalence between $U$ and $U'$ \emph{(}without 
being informed of the work of the fifth\emph{),} and a seventh reconciling them by 
discovering an $\infty$\nobreakdash-equivalence between these equivalences. The story 
of course is infinite, I better stop with seven mathematicians,}\,\dots''~\cite[section 9]{PS}.
\medbreak

One of the reasons of Grothendieck's interest in $\infty$\nobreakdash-groupoids is that (weak) $\infty$\nobreakdash-groupoids are conjectured to modelize homotopy types (it's well known that \emph{strict} $\infty$\nobreakdash-groupoids don't): ``\emph{Among the things to be checked is of course that when we localize the 
category of $\infty$\nobreakdash-groupoids with respect to morphisms which are ``weak 
equivalences'' in a rather obvious sense \emph{(N.B. --} the definition of the 
$\pi_i$'s of an $\infty$\nobreakdash-groupoid is practically trivial!\emph{),} we get a category 
equivalent to the usual homotopy category $\Hot$.}''~\cite[section 12]{PS}.
This conjecture is still \emph{not} proven, for any definition of $\infty$-groupoid giving rise to an \emph{algebraic} structure species, although some progress has been done in this direction by Cisinski~\cite{Ci}. It becomes tautological if we \emph{define} $\infty$\nobreakdash-groupoids as being Kan complexes or topological spaces! But the categories of such are \emph{not} locally presentable. In a letter to Tim Porter, Grothendieck clearly explains that this was not the kind of definition he was looking for: ``\emph{my main point is that your suggestion that Kan complexes are ``the ultimate in lax $\infty$\nobreakdash-groupoids'' does not in any way meet with what I am really looking for, and this for a variety of reasons,\,\dots}''\cite{Corr}.
\medbreak

One of the peculiarities of Grothendieck's definition of $\infty$\nobreakdash-groupoids is that this notion is \emph{not} a particular case of a concept of lax $\infty$\nobreakdash-category. Nevertheless, it was realized in \cite{Ma2} that a slight modification of the notion of coherator gives rise to such a concept. This new formalization of lax $\infty$\nobreakdash-categories is very close, although not exactly equivalent, to the notion introduced by Batanin~\cite{Bat}. From a technical point of view, the basic difference is that the first is based on universal algebra, whereas the second on a generalization of the notion of operads, the globular operads. The precise relationship between the two concepts is studied in~\cite{Ara}.
\medbreak

In the first section, Grothendieck's definition of $\infty$-groupoids is presented (in a two-page slightly simplified form), introducing the notion of ``coherator for a theory of $\infty$\nobreakdash-groupoids''. Some examples of such coherators are given. The aim of the very long (too long?) last subsection~\ref{structmor} is to convince the reader of the pertinence of Grothendieck's concept, and define some structural maps in $\infty$\nobreakdash-groupoids, useful in the next section.
\medbreak

In section 2, homotopy groups and weak equivalences between $\infty$\nobreakdash-groupoids are introduced. The pair of adjoint functors ``classifying space'' of a $\infty$\nobreakdash-groupoid and ``fundamental $\infty$\nobreakdash-groupoid'' of a topological space are defined. Grothendieck's conjecture is presented.
\medbreak

In section 3, the only really original part of this paper, an interpretation of the notion of coherator for a theory of $\infty$\nobreakdash-groupoids, in terms of lifting properties and weak factorization systems, is given.
\medbreak

In the last section, the definition of $\infty$-categories of~\cite{Ma2} is presented, introducing the notion of ``coherator for a theory of $\infty$\nobreakdash-categories''. The reader mainly interested by this definition can read directly this section after~\ref{cohgr} and skip everything in between.
\medbreak

In appendix A, a technical result used in section 3 is proved.
\goodbreak

\section{Grothendieck $\infty$-groupoids}

\begin{paragr} \textbf{The category of globes.} \label{catglob}
The \emph{globular category} or \emph{category of globes} is the category $\Glob$ generated by the graph
\[
\xymatrix{
\Dn{0} \ar@<.8ex>[r]^-{\Ths{1}} \ar@<-.8ex>[r]_-{\Tht{1}} &
\Dn{1} \ar@<.8ex>[r]^-{\Ths{2}} \ar@<-.8ex>[r]_-{\Tht{2}} &
\ \ \cdots\ \ \ar@<.8ex>[r]^-{\Ths{i-1}} \ar@<-.8ex>[r]_-{\Tht{i-1}} &
\Dn{i-1} \ar@<.8ex>[r]^-{\Ths{i}} \ar@<-.8ex>[r]_-{\Tht{i}} &
\Dn{i} \ar@<.8ex>[r]^-{\Ths{i+1}} \ar@<-.8ex>[r]_-{\Tht{i+1}} &
\ \ \cdots\smsp\smsp,
}
\]
under the \emph{coglobular} relations
\[
\Ths{i+1}\Ths{i} = \Tht{i+1}\Ths{i}\quad\text{and}\quad\Ths{i+1}\Tht{i} =
\Tht{i+1}\Tht{i}\smsp, \qquad i \geqslant 1\smsp.
\]
For every $i,\,j$, such that $0\leqslant i\leqslant j$, define
\[
\Ths[i]{j} = \Ths{j}\cdots\Ths{i+2}\Ths{i+1}\quad\text{and}\quad
  \Tht[i]{j} = \Tht{j}\cdots\Tht{i+2}\Tht{i+1}\smsp,
\]
and observe that
\[
\Hom_\Glob(\Dn{i},\Dn{j}) = 
  \begin{cases}
  \{\Ths[i]{j}, \Tht[i]{j}\}\smsp, & \text{if $i < j$\smsp,}\\
  \{\id{\Dn{i}}\}\smsp, & \text{if $i = j$}\smsp,\\
  \varnothing\smsp, & \text{else.}
  \end{cases}
\]
\end{paragr}

\begin{paragr} \textbf{Globular sums.} \label{sommesglob}
Let $\C$ be a category, $\Glob\to\C$ a functor, and let $\ImD{i}$, $\Ims{i}$, $\Imt{i}$, $\Ims[i]{j}$, $\Imt[i]{j}$ be the image in $\C$ of $\Dn{i}$, $\Ths{i}$, $\Tht{i}$, $\Ths[i]{j}$, $\Tht[i]{j}$ respectively.
A \emph{standard iterated amalgamated sum}, or more simply a \emph{globular sum}, of \emph{length} $m$, in $\C$ is an iterated amalgamated sum of the form
\[
(\ImD{i_1},\Ims[i'_1]{i_1})\amalg^{}_{\ImD{i'_1}}(\Imt[i'_1]{i_2},\ImD{i_2},\Ims[i'_2]{i_2})\amalg^{}_{\ImD{i'_2}}\cdots\,\,\amalg^{}_{\ImD{i'_{m-1}}}(\Imt[i'_{m-1}]{i_m},\ImD{i_m})\smsp,
\]
colimit of the diagram
\[
\hskip 20pt
\xymatrixrowsep{.2pc}
\xymatrixcolsep{1pc}
\xymatrix{
\ImD{i_1} &  & \ImD{i_2} &  & \ImD{i_3} &        & \ImD{i_{m - 1}} & & \ImD{i_{m}} \\
  &  &   &  &   & \cdots &     & & \\
  & \ImD{i'_1}
  \ar[uul]^{\Ims[i'_1]{i_1}}
  \ar[uur]_{\negthickspace \Imt[i'_1]{i_2}}  &   &
  \ImD{i'_2}' 
  \ar[uul]^{\Ims[i'_2]{i_2}\negthickspace}
  \ar[uur]_{\Imt[i'_2]{i_3}}  &  &  & &
\ImD{i'_{m-1}} \ar[uul]^{\Ims[i'_{m-1}]{i_{m-1}}}  \ar[uur]_{\Imt[i'_{m-1}]{i_m}}
& & \hskip -20pt\text{,}
}
\]
where $m\geqslant1$, and for every $k$, $1\leqslant k<m$, $i'_k$ is strictly smaller then $i_k$ and $i_{k+1}$. Such a globular sum is completely determined by the \emph{table of dimensions}
\[
\left(
\begin{matrix}
i_1 && i_2 && \cdots && i_m \cr
& i'_1 && i'_2 & \cdots & i'_{m-1}
\end{matrix}
\right)\smsp,
\]
and will be simply denoted
\[
\ImD{i_1}\amalg^{}_{\ImD{i'_1}}\ImD{i_2}\amalg^{}_{\ImD{i'_2}}\cdots\,\,\amalg^{}_{\ImD{i'_{m-1}}}\ImD{i_m}\smsp.
\]
\end{paragr}

\begin{paragr} \textbf{Globular extensions.} \label{extglob}
A category $\C$, endowed with a functor $\Glob\to\C$, is called a \emph{globular extension} if globular sums exist in $\C$. For example, \emph{any} functor from $\Glob$ to a cocomplete category defines a globular extension. A morphism from a globular extension to another is a functor under $\Glob$, commuting with globular sums. There exists a universal globular extension $\Glob\to\Thetazero$, called a \emph{globular completion of $\Glob$}, satisfying the following universal property: for every globular extension $\Glob\to\C$, there exists a morphism of globular extensions $\Thetazero\to\C$, unique up to unique natural isomorphism (inducing the identity on objects coming from $\Glob$). This universal globular extension, defined up to equivalence of categories, can be constructed, for example, by taking the closure by globular sums of $\Glob$ embedded by the Yoneda functor in the category $\pref{\Glob}$ of \emph{globular sets} (or $\infty$\nobreakdash-\emph{graphs}), \ie~presheaves on $\Glob$. The objects of $\Thetazero$ are \emph{rigid}, \ie~have no non-trivial automorphisms, and there is a combinatorial description of the category $\Thetazero$ in terms of planar trees~\cite{Ara,Ber,Ma2}, leading to a \emph{skeletal} incarnation of $\Thetazero$ (such that isomorphic objects are equal), the objects being in bijection with tables of dimensions. In the sequel we choose, once and for all, such a skeletal model of $\Thetazero$.
\end{paragr}

\begin{paragr} \textbf{\boldmath Coherators for a theory of $\infty$-groupoids.} \label{cohgr}
Let $\C$ be a category, $\Glob\to\C$ a functor, and let $\ImD{i}$, $\Ims{i}$, $\Imt{i}$ be the image in $\C$ of $\Dn{i}$, $\Ths{i}$, $\Tht{i}$ respectively. A \emph{pair of parallel arrows} in $\C$ is a pair $(f,g)$ of arrows $f,g:\ImD{i}\to X$ in $\C$ such that either $i=0$, or $i>0$ and $f\Ims{i}=g\Ims{i}$, $f\Imt{i}=g\Imt{i}$. A \emph{lifting} of such a pair $(f,g)$ is an arrow $h:\ImD{i+1}\to X$ such that $f=h\Ims{i+1}$, $g=h\Imt{i+1}$. A \emph{coherator for a theory of $\infty$\nobreakdash-groupoids}, or more simply a \emph{\Gr-coherator}, is a globular extension $\Glob\to\Coh$ satisfying the following two conditions:
\smallbreak

\emph{a}) Every pair of parallel arrows in $\Coh$ has a lifting in $\Coh$.
\smallskip

\emph{b}) There exists a ``tower'' of globular extensions (called \emph{tower of definition} of the \Gr\nobreakdash-coherator $\Coh$) with colimit $\Coh$
\[
\xymatrixcolsep{1.5pc}
\xymatrix{
\Glob \ar[r]
&\Coh_0\ar[r]
&\Coh_1\ar[r]
&\cdots\ar[r]
&\Coh_n\ar[r]
&\Coh_{n+1}\ar[r]
&\cdots\ar[r]
&\Coh\simeq\varinjlim\Coh_n
}\smsp,
\]
where for every $n\geqslant0$, $\Coh_n$ is a small category, $\Coh_n\to\Coh_{n+1}$ a morphism of globular extensions, and 
satisfying the following properties:
\begin{itemize}
\item[$b_0$\hskip 1pt)] $\Glob\to\Coh_0$ is a globular completion;
\item[$b_n$)] for every $n\geqslant0$, there exists a family of pairs of parallel arrows in $\Coh_n$ such that $\Coh_{n+1}$ is the universal globular extension obtained from $\Coh_n$ by formally adding a lifting for every pair in this family.
\end{itemize}
Condition $(b_0)$ implies that the category $\Coh_0$ is equivalent to $\Thetazero$. We will usually assume that $\Coh_0$ is \emph{equal} to $\Thetazero$ (and that the functor $\Glob\to\Coh_0$ is the canonical inclusion \hbox{$\Glob\to\Thetazero$}). Condition $(b_n)$ means, more precisely, that there exists a family $(\mathUniv{f}_i,\mathUniv{g}_i)_{i\in I_n}$  of pairs of parallel arrows in $\Coh_n$, and for every $i\in I_n$, a lifting $\mathUniv{h}_i$ in $\Coh_{n+1}$ of the image of the pair $(\mathUniv{f}_i,\mathUniv{g}_i)$ in $\Coh_{n+1}$, satisfying the following universal property. For every globular extension $\Glob\to\C$ and every morphism of globular extensions $\Coh_n\to\C$, if for every $i\in I_n$ a lifting $h_i$ of the image of the pair $(\mathUniv{f}_i,\mathUniv{g}_i)$ in $\C$ is given, then there exists a unique morphism of globular extensions $F:\Coh_{n+1}\to\C$ such that for every $i$ in $I_n$, $F(\mathUniv{h}_i)=h_i$ and such that the triangle
\[
\xymatrixrowsep{1.7pc}
\xymatrix{
\Coh_n\ar[r]\ar[rd]
&\Coh_{n+1}\ar[d]^{F}
\\
&\C
}
\]
is commutative. It can be easily seen that the functors $\Coh_n\to\Coh_{n+1}$ induce bijections on the sets of objects, so that we can suppose that all categories $\Coh_n$ and $\Coh$ have same objects, indexed by tables of dimensions (or planar trees). Furthermore, it can be proved (\cf~\ref{fondinfgrpd}) that for any \Gr\nobreakdash-coherator $\Coh$, the induced functor $\Thetazero=\Coh_0\to\Coh$ is faithful. In particular, the category $\Glob$ will be usually identified to a (non-full) subcategory of $\Coh$. It is conjectured that all functors $\Coh_n\to\Coh$ are faithful.
\end{paragr}

\begin{paragr} \textbf{\boldmath Grothendieck $\infty$-groupoids.} \label{definfgr}
Let $\Glob\to\Coh$ be a \Gr\nobreakdash-coherator. A \emph{$\infty$\nobreakdash-groupoid of type $\Coh$}, or more simply a \emph{$\infty$\nobreakdash-$\Coh$\nobreakdash-groupoid}, is a presheaf $\Grp:\Coh^\op\to\Ens$ on $\Coh$ such that the functor $\Grp^\op:\Coh\to\Ens^\op$ preserves globular sums. In other terms, for any globular sum in $\Coh$, the canonical map
\[
\Grp(\Dn{i_1}\amalg^{}_{\Dn{i'_1}}\cdots\,\,\amalg^{}_{\Dn{i'_{m-1}}}\Dn{i_m})\toto
\Grp(\Dn{i_1})\times^{}_{\Grp(\Dn{i'_1})}\cdots\,\,\times^{}_{\Grp(\Dn{i'_{m-1}})}\Grp(\Dn{i_m})
\smsp
\]
is a bijection, the right hand side being the \emph{standard iterated fiber product} or \emph{globular product}, limit of the diagram
\[
\xymatrixrowsep{.2pc}
\xymatrixcolsep{1pc}
\xymatrix{
\Grp(\Dn{i_1})\ar[rdd]_(.37){\Grp(\Ths[i'_1]{i_1})}
&&\Grp(\Dn{i_2})\ar[ldd]^(.37){\!\Grp(\Tht[i'_1]{i_2})}\ar[rdd]^(.65){\!\!\!\!\Grp(\Ths[i'_2]{i_2})}
&&&&\Grp(\Dn{i_m})\ar[ldd]^(.37){\!\Grp(\Tht[i'_{m-1}]{i_m})}
\\
&&&&\cdots
\\
&\Grp(\Dn{i'_1})
&&\Grp(\Dn{i'_2})
&&\Grp(\Dn{i'_{m-1}})
&\hskip 40pt.
}
\] 
The \emph{category of $\infty$\nobreakdash-$\Coh$\nobreakdash-groupoids} is the full subcategory of $\pref{\Coh}$, category of presheaves on $\Coh$, whose objects are $\infty$\nobreakdash-$\Coh$\nobreakdash-groupoids.
\end{paragr}

\begin{paragr} \textbf{Examples of \Gr-coherators.} \label{excohgr}
There is a general method for constructing inductively \Gr\nobreakdash-coherators. Take $\Coh_0=\Thetazero$. Suppose that $\Coh_n$ is defined and choose a set $\Adm_n$ of pairs of parallel arrows in $\Coh_n$.
Define $\Coh_{n+1}$ as the universal globular extension obtained by formally adding a lifting for each pair in $\Adm_n$ (an easy categorical argument shows that such a universal globular extension exists, is unique up to unique isomorphism, and that the functor $\Coh_n\to\Coh_{n+1}$ induces a bijection on the sets of objects~\cite[section~2.6]{Ara}). Let $\Coh$ be the colimit $\Coh=\varinjlim\Coh_n$. For an arbitrary choice of the sets $\Adm_n$, $\Coh$ need not be a \Gr\nobreakdash-coherator, as there is no reason for condition (\emph{a}) in~\ref{cohgr} to be satisfied. A sufficient (but not necessary) condition for $\Coh$ to be a \Gr\nobreakdash-coherator is that every pair of parallel arrows in $\Coh$ is the image of a pair in~$\Adm_n$, for some $n\geqslant0$. Three important examples can be constructed (among many others) by this method.
\smallbreak

1) \textbf{The canonical \Gr-coherator $\Coh=\Cohcan$\,.} This example is obtained by taking $\Adm_n$ to be the set of \emph{all} pairs of parallel arrows in $\Coh_n$. 
\smallbreak

2) \textbf{The Batanin-Leinster \Gr-coherator $\Coh=\CohBL$\,.} It is obtained by defining $\Adm_n$ to be the set of pairs of parallel arrows in $\Coh_n$ that are not the image of a pair in $\Adm_{n'}$, for some $n'<n$~\cite[4.1.4]{Ara}.

3) \textbf{The canonical reduced \Gr-coherator $\Coh=\Cohred$\,.} It is constructed by taking $\Adm_n$ to be the set of  pairs of parallel arrows in $\Coh_n$ that do not have already a lifting in $\Coh_n$.
\smallbreak

It is easily seen that examples 1 and 2 satisfy the sufficient condition stated above. The example 3 does \emph{not} satisfy this condition; nevertheless, it is clear that $\Cohred$ \emph{is} a \Gr\nobreakdash-coherator. It is possible to put even more restrictive conditions on the sets $\Adm_n$ and still obtain a \Gr\nobreakdash-coherator. It seems that it is not possible to find a minimal way for choosing the sets $\Adm_n$.
\end{paragr}

\begin{paragr} \textbf{Some structural maps.} \label{structmor}
Fix a \Gr-coherator $\Coh$ and a $\infty$\nobreakdash-$\Coh$\nobreakdash-groupoid\break 
$\Grp:\Coh^\op\to\Ens$. The restriction of $\Grp$ to the subcategory $\Glob$ of $\Coh$ defines a $\infty$\nobreakdash-graph, called the \emph{underlying $\infty$\nobreakdash-graph}:
\[
\xymatrixcolsep{2.2pc}
\xymatrix{
\GrpFl{0}
&\GrpFl{1} \ar@<.6ex>[l]^(.45){\Grpt{1}}\ar@<-.9ex>[l]_(.45){\Grps{1}}  
&\quad\cdots\quad\ar@<.6ex>[l]^(.55){\Grpt{2}}\ar@<-.9ex>[l]_(.55){\Grps{2}}
&\GrpFl{i-1}\ar@<.6ex>[l]^(.43){\Grpt{i-1}}\ar@<-.9ex>[l]_(.43){\Grps{i-1}}
&\GrpFl{i}\ar@<.6ex>[l]^(.41){\Grpt{i}}\ar@<-.9ex>[l]_(.41){\Grps{i}}
&\quad\cdots\ar@<.6ex>[l]^{\Grpt{i+1}}\ar@<-.9ex>[l]_{\Grps{i+1}}
}\smsp,
\]
where $\GrpFl{i}=\Grp(\Dn{i})$, $\Grps{i}=\Grp(\Ths{i})$ and $\Grpt{i}=\Grp(\Tht{i})$. The elements of $\GrpFl{i}$ are the \emph{$i$\nobreakdash-cells} of $\Grp$, and $\Grps{i}$, $\Grpt{i}$ are the \emph{source} and \emph{target} maps respectively, satisfying the \emph{globular relations}:
\[
\Grps{i}\Grps{i+1}=\Grps{i}\Grpt{i+1}\quad,\qquad\Grpt{i}\Grps{i+1}=\Grpt{i}\Grpt{i+1}\smsp,\qquad i\geqslant1\smsp.
\] 
Operations and coherence arrows in the $\infty$\nobreakdash-$\Coh$\nobreakdash-groupoid $\Grp$ are defined using the existence of lifting arrows for pairs of parallel arrows in the \Gr\nobreakdash-coherator $\Coh$. In what follows, we give some examples of such structural maps of $\Grp$ (for more details see~\cite[section~4.2]{Ara}). When there is no ambiguity, let's denote by
\[
\xymatrixcolsep{3pc}
\xymatrix{
\Dn{i_k}\ar[r]^-{\can{k}}
&\raise -12pt\hbox{$\Dn{i_1}\amalg^{}_{\Dn{i'_1}}\cdots\,\,\amalg^{}_{\Dn{i'_{m-1}}}\Dn{i_m}$}
}
\]
the canonical map of the $k$-th summand into a globular sum.
\smallbreak

\begin{subparagr} \label{bincompl1}
\textbf{Level 1 binary composition.} For every $i\geqslant1$, the two composite arrows
\[
\xymatrixrowsep{.7pc}
\xymatrix{
\Dn{i-1}\ar[r]^-{\Ths{i}}
&\Dn{i}\ar[r]^-{\can{2}}
&\Dn{i}\amalg_{\Dn{i-1}}\Dn{i}
\\
\Dn{i-1}\ar[r]^-{\Tht{i}}
&\Dn{i}\ar[r]^-{\can{1}}
&\Dn{i}\amalg_{\Dn{i-1}}\Dn{i}
},
\]
form a pair of parallel arrows in $\Coh$, therefore there is a lifting $\comultbin{i}:\,=\comultbin[1]{i}$
such that
\[
\xymatrixrowsep{2.7pc}
\xymatrixcolsep{4.7pc}
\xymatrix{
\Dn{i}\ar@<.5ex>[rd]^{\comultbin{i}=\comultbin[1]{i}}
&
& \ar@{}@<-5ex>[r]^{\textstyle \comultbin{i}\Ths{i}=\can{2}\Ths{i}\smsp,} 
  \ar@{}@<-10ex>[r]^{\textstyle \comultbin{i}\Tht{i}=\can{1}\Tht{i}\smsp.}
&
\\
\,\Dn{i-1} \ar@<1ex>[u]^{\Ths{i}} \ar@<-1ex>[u]_{\Tht{i}}
           \ar@<1ex>[r]^-{\can{2}\Ths{i}} \ar@<-1ex>[r]_-{\can{1}\Tht{i}}
&\Dn{i}\amalg_{\Dn{i-1}}\Dn{i} 
}
\]
We deduce a map 
\[
\xymatrixrowsep{2.7pc}
\xymatrixcolsep{2.7pc}
\xymatrix{
\GrpFl{i}\times_{\GrpFl{i-1}}\GrpFl{i}\simeq\Grp(\Dn{i}\amalg_{\Dn{i-1}}\Dn{i}) \ar[r]^-{\Grp(\comultbin{i})}
&\Grp(\Dn{i})=\GrpFl{i}\smsp,
}
\]
associating to each pair of $i$-cells $(x,y)$ such that $\Grps{i}(x)=\Grpt{i}(y)$ a $i$\nobreakdash-cell 
\[
\multbin{}{x}{y}:\,=\multbin{i}{x}{y}:\,=\multbin[1]{i}{x}{y}:\,=\Grp(\comultbin{i})(x,y)
\]
such that
\[
\Grps{i}(\multbin{}{x}{y})=\Grps{i}(y)\hhbox{and}\Grpt{i}(\multbin{}{x}{y})=\Grpt{i}(x)\smsp.
\]
This defines a ``\emph{vertical}'' or ``\emph{level $1$}'' \emph{composition} of $i$-cells. The lifting $\comultbin{i}$ need not be unique, but if $\comultbinprim{i}$ is such another lifting (defining another vertical composition $\multbinprim{}{}{}$ on $i$\nobreakdash-cells), then $(\comultbin{i},\comultbinprim{i})$ is a pair of parallel arrows in $\Coh$, and there exists a lifting $\mathUnivGr{\Gamma}:\Dn{i+1}\to\Dn{i}\amalg_{\Dn{i-1}}\Dn{i}$ such that $\mathUnivGr{\Gamma}\,\Ths{i+1}=\comultbin{i}$ and $\mathUnivGr{\Gamma}\,\Tht{i+1}=\comultbinprim{i}$, hence a map $c:\GrpFl{i}\times_{\GrpFl{i-1}}\GrpFl{i}\to\GrpFl{i+1}$, associating to each pair $(x,y)$ of ``composable'' $i$\nobreakdash-cells a ``homotopy'' $(i+i)$\nobreakdash-cell $c(x,y)$ of source $\multbin{}{x}{y}$ and target $\multbinprim{}{x}{y}$.
\end{subparagr}

\begin{subparagr} \label{bincompl2}
\textbf{Level 2 binary composition.} Given for every $i\geqslant1$ a lifting $\comultbin{i}=\comultbin[1]{i}$ as above, observe that, for $i\geqslant2$, the two composite arrows
\[
\xymatrixcolsep{4.2pc}
\xymatrixrowsep{.7pc}
\xymatrix{
\Dn{i-1}\ar[r]^-{\comultbin[1]{i-1}}
&\Dn{i-1}\amalg_{\Dn{i-2}}\Dn{i-1}\ar[r]^-{\Ths{i}\amalg_{\Dn{i-2}}\Ths{i}}
&\Dn{i}\amalg_{\Dn{i-2}}\Dn{i}
\\
\Dn{i-1}\ar[r]^-{\comultbin[1]{i-1}}
&\Dn{i-1}\amalg_{\Dn{i-2}}\Dn{i-1}\ar[r]^-{\Tht{i}\amalg_{\Dn{i-2}}\Tht{i}}
&\Dn{i}\amalg_{\Dn{i-2}}\Dn{i}
},
\]
form a pair of parallel arrows in $\Coh$. Therefore there is a lifting $\comultbin[2]{i}$
such that
\[
\xymatrixrowsep{3.5pc}
\xymatrixcolsep{3.2pc}
\xymatrix{
\Dn{i}\ar@<.5ex>[rrd]^{\comultbin[2]{i}}
&
&
& \ar@{}@<-6ex>[r]^{\textstyle\qquad\comultbin[2]{i}\Ths{i}=(\Ths{i}\amalg_{\Dn{i-2}}\Ths{i})\comultbin[1]{i-1}\smsp,} 
  \ar@{}@<-11ex>[r]^{\textstyle\qquad\comultbin[2]{i}\Tht{i}=(\Tht{i}\amalg_{\Dn{i-2}}\Tht{i})\comultbin[1]{i-1}\smsp.}
&
\\
\,\Dn{i-1} \ar@<1ex>[u]^{\Ths{i}} \ar@<-1ex>[u]_{\Tht{i}}
           \ar@<1ex>[rr]^-{(\Ths{i}\amalg_{\Dn{i-2}}\Ths{i})\comultbin[1]{i-1}\ } 
           \ar@<-1ex>[rr]_-{(\Tht{i}\amalg_{\Dn{i-2}}\Tht{i})\comultbin[1]{i-1}\ }
&&\Dn{i}\amalg_{\Dn{i-2}}\Dn{i} 
}
\]
We deduce a map 
\[
\xymatrixrowsep{2.7pc}
\xymatrixcolsep{2.7pc}
\xymatrix{
\GrpFl{i}\times_{\GrpFl{i-2}}\GrpFl{i}\simeq\Grp(\Dn{i}\amalg_{\Dn{i-2}}\Dn{i}) \ar[r]^-{\Grp(\comultbin[2]{i})}
&\Grp(\Dn{i})=\GrpFl{i}
},
\]
associating to each pair of $i$-cells $(x,y)$ such that the iterated source of $x$ in $\GrpFl{i-2}$ is equal to the iterated target of $y$, a $i$\nobreakdash-cell (level 2 composition of $x$ and $y$)
\[
\multbin[2]{i}{x}{y}:\,=\Grp(\comultbin[2]{i})(x,y)
\]
such that
\[
\Grps{i}(\multbin[2]{i}{x}{y})=\multbin[1]{i-1}{\Grps{i}(x)}{\Grps{i}(y)}\hhbox{and}\Grpt{i}(\multbin[2]{i}{x}{y})=\multbin[1]{i-1}{\Grpt{i}(x)}{\Grpt{i}(y)}\smsp.
\]
\end{subparagr}

\begin{subparagr} \label{bincompll}
\textbf{Level $l$ binary composition.} The above construction can be iterated in order to obtain by induction on $l\geqslant2$, for every $i\geqslant l$, a map $\comultbin[l]{i}:\Dn{i}\to\Dn{i}\amalg_{\Dn{i-l}}\Dn{i}$, lifting of the pair of parallel arrows $\bigl((\Ths{i}\amalg_{\Dn{i-l}}\Ths{i})\comultbin[l-1]{i-1},(\Tht{i}\amalg_{\Dn{i-l}}\Tht{i})\comultbin[l-1]{i-1}\bigr)$, defining a map
\[
\xymatrixrowsep{2.7pc}
\xymatrixcolsep{2.7pc}
\xymatrix{
\GrpFl{i}\times_{\GrpFl{i-l}}\GrpFl{i}\simeq\Grp(\Dn{i}\amalg_{\Dn{i-l}}\Dn{i}) \ar[r]^-{\Grp(\comultbin[l]{i})}
&\Grp(\Dn{i})=\GrpFl{i}
},
\]
associating to each pair of $i$-cells $(x,y)$ such that the iterated source of $x$ in $\GrpFl{i-l}$ is equal to the iterated target of $y$, a $i$\nobreakdash-cell (level $l$ composition of $x$ and $y$)
\[
\multbin[l]{i}{x}{y}:\,=\Grp(\comultbin[l]{i})(x,y)
\]
such that
\[
\Grps{i}(\multbin[l]{i}{x}{y})=\multbin[l-1]{i-1}{\Grps{i}(x)}{\Grps{i}(y)}\hhbox{and}\Grpt{i}(\multbin[l]{i}{x}{y})=\multbin[l-1]{i-1}{\Grpt{i}(x)}{\Grpt{i}(y)}\smsp.
\]
\end{subparagr}

\begin{subparagr} \label{mcompl1}
\textbf{Level $1$ $m$-ary composition.} There are many more general compositions in the structure of the $\infty$\nobreakdash-$\Coh$\nobreakdash-groupoid $\Grp$. For example, let $m$ be an integer, $m\geqslant2$. For every $i\geqslant1$, the two composite arrows
\[
\xymatrixcolsep{2.7pc}
\xymatrixrowsep{.7pc}
\xymatrix{
\Dn{i-1}\ar[r]^-{\Ths{i}}
&\Dn{i}\ar[r]^-{\can{m}}
&\Dn{i}\amalg_{\Dn{i-1}}\cdots\amalg_{\Dn{i-1}}\Dn{i}\smsp,
\\
\Dn{i-1}\ar[r]^-{\Tht{i}}
&\Dn{i}\ar[r]^-{\can{1}}
&\Dn{i}\amalg_{\Dn{i-1}}\cdots\amalg_{\Dn{i-1}}\Dn{i}\smsp,
}
\]
with target the globular sum of length $m$, form a pair of parallel arrows in $\Coh$, hence a lifting
\[
\comultbin[1]{i,m}:\Dn{i}\toto\Dn{i}\amalg_{\Dn{i-1}}\cdots\amalg_{\Dn{i-1}}\Dn{i}\smsp,
\]
inducing a map
\[
\xymatrixcolsep{5.7pc}
\xymatrix{
\GrpFl{i}\times_{\GrpFl{i-1}}\cdots\times_{\GrpFl{i-1}}\GrpFl{i}\ar[r]^-{\multbin[1]{i,m}{}{}:\,=\Grp(\comultbin[1]{i,m})}
&\GrpFl{i}\smsp.
}
\]
This map defines a $m$-ary composition, associating to each ``composable'' $m$-uple $(x_1,\dots,x_m)$ of $i$\nobreakdash-cells, a $i$\nobreakdash-cell 
\[
\multbin[1]{i,m}{}{}(x_1,\dots,x_m)=\Grp(\comultbin[1]{i,m})(x_1,\dots,x_m)
\]
such that
\[
\Grps{i}\bigl(\,\multbin[1]{i,m}{}{}(x_1,\dots,x_m)\bigr)=\Grps{i}(x_m)\hhbox{and}
\Grpt{i}\bigl(\,\multbin[1]{i,m}{}{}(x_1,\dots,x_m)\bigr)=\Grpt{i}(x_1)\smsp.
\]
\end{subparagr}

\begin{subparagr} \label{assbincompl1}
\textbf{Associativity constraint for level $1$ binary composition.}  For every $i\geqslant1$, observe that the two composite arrows
\[
\xymatrixcolsep{4.7pc}
\xymatrixrowsep{.7pc}
\xymatrix{
\Dn{i}\ar[r]^-{\comultbin{i}}
&\Dn{i}\amalg_{\Dn{i-1}}\Dn{i}\ar[r]^-{\comultbin{i}\amalg_{\Dn{i-1}}\id{\Dn{i}}}
&\Dn{i}\amalg_{\Dn{i-1}}\Dn{i}\amalg_{\Dn{i-1}}\Dn{i}
\\
\Dn{i}\ar[r]^-{\comultbin{i}}
&\Dn{i}\amalg_{\Dn{i-1}}\Dn{i}\ar[r]^-{\id{\Dn{i}}\amalg_{\Dn{i-1}}\comultbin{i}}
&\Dn{i}\amalg_{\Dn{i-1}}\Dn{i}\amalg_{\Dn{i-1}}\Dn{i}
},
\]
(where $\comultbin{i}=\comultbin[1]{i}$ is as in \ref{bincompl1}) form a pair of parallel arrows in $\Coh$, therefore there exists a lifting 
\[
\coass{}:\,=\coass{i}:\,=\coass[1]{i}:\Dn{i+1}\toto\Dn{i}\amalg_{\Dn{i-1}}\Dn{i}\amalg_{\Dn{i-1}}\Dn{i}\smsp.
\]
We deduce a map 
\[
\xymatrixcolsep{5.7pc}
\xymatrix{
\GrpFl{i}\times_{\GrpFl{i-1}}\GrpFl{i}\times_{\GrpFl{i-1}}\GrpFl{i} \ar[r]^-{\ass{}:\,=\ass[1]{i}:\,=\Grp(\coass[1]{i})}
&\GrpFl{i+1}
},
\]
associating to each triple of ``composable'' $i$-cells $(x,y,z)$ an \emph{associativity constraint} $(i+1)$\nobreakdash-cell $\ass{}{}_{x,y,z}$ such that (in the notations of~\ref{bincompl1})
\[
\Grps{i+1}(\ass{}{}_{x,y,z})=\multbin{}{(\multbin{}{x}{y})}{z}\hhbox{and}\Grpt{i+1}(\ass{}{}_{x,y,z})=\multbin{}{x}{(\multbin{}{y}{z})}\smsp.
\]
\end{subparagr}

\begin{subparagr} \label{assbincompl2}
\textbf{Associativity constraint for level $2$ binary composition.} The construction of associativity constraints for higher-level compositions becomes more complicated. For example, for the level $2$ composition, observe (in the notations of~\ref{bincompl2}) that 
\[
\bigl((\comultbin[2]{i}\amalg_{\Dn{i-2}}\id{\Dn{i}})\comultbin[2]{i},\,(\id{\Dn{i}}\amalg_{\Dn{i-2}}\comultbin[2]{i})  \comultbin[2]{i}\bigr)\smsp,\quad i\geqslant2\smsp,
\]
is \emph{not} a pair of parallel arrows as
\[
\begin{aligned}
(\comultbin[2]{i}\amalg_{\Dn{i-2}}\id{\Dn{i}})\comultbin[2]{i}\Ths{i}&=(\Ths{i}\amalg_{\Dn{i-2}}\Ths{i}\amalg_{\Dn{i-2}}\Ths{i})(\comultbin[1]{i-1}\amalg_{\Dn{i-2}}\id{\Dn{i-1}})\comultbin[1]{i-1}\cr
\noalign{\vskip 3pt}
&\kern -80pt\neq(\Ths{i}\amalg_{\Dn{i-2}}\Ths{i}\amalg_{\Dn{i-2}}\Ths{i})(\id{\Dn{i-1}}\amalg_{\Dn{i-2}}\comultbin[1]{i-1})\comultbin[1]{i-1}=(\id{\Dn{i}}\amalg_{\Dn{i-2}}\comultbin[2]{i})\comultbin[2]{i}\Ths{i}
\end{aligned}
\]
and similarly with $\Ths{i}$ replaced by $\Tht{i}$. In order to be able to define an associativity constraint for level 2 composition, verify that the two following composite arrows
\[
\xymatrixcolsep{1.63pc}
\xymatrixrowsep{.7pc}
\xymatrix{
\Dn{i}\ar[r]^-{\comultbin[1]{i}}
&\Dn{i}\!\amalg_{\Dn{i-1}}\!\!\Dn{i}\ar[rrrrrrr]^-{(( \Tht{i}\amalg_{\Dn{i-2}}\Tht{i}\amalg_{\Dn{i-2}}\Tht{i})\coass[1]{i-1}\kern -2pt,\,(\comultbin[2]{i}\amalg_{\Dn{i-2}}\id{\Dn{i}})\comultbin[2]{i})}
&&&&&&&\Dn{i}\!\amalg_{\Dn{i-2}}\kern -3pt\Dn{i}\!\amalg_{\Dn{i-2}}\kern -3pt\Dn{i},
\\
\Dn{i}\ar[r]^-{\comultbin[1]{i}}
&\Dn{i}\!\amalg_{\Dn{i-1}}\!\!\Dn{i}\ar[rrrrrrr]^-{((\id{\Dn{i}} \amalg_{\Dn{i-2}}  \comultbin[2]{i} )\comultbin[2]{i},\,(\Ths{i}\amalg_{\Dn{i-2}}\Ths{i}\amalg_{\Dn{i-2}}\Ths{i})\coass[1]{i-1})}
&&&&&&&\Dn{i}\!\amalg_{\Dn{i-2}}\kern -3pt\Dn{i}\!\amalg_{\Dn{i-2}}\kern -3pt\Dn{i}
}
\]
form a pair of parallel arrows. Hence a lifting
\[
\coass[2]{i}:\Dn{i+1}\toto\Dn{i}\amalg_{\Dn{i-2}}\Dn{i}\amalg_{\Dn{i-2}}\Dn{i}\smsp,
\]
inducing a map 
\[
\xymatrixcolsep{4.3pc}
\xymatrix{
\GrpFl{i}\times_{\GrpFl{i-2}}\GrpFl{i}\times_{\GrpFl{i-2}}\GrpFl{i} \ar[r]^-{\ass[2]{i}:\,=\Grp(\coass[2]{i})}
&\GrpFl{i+1}
},
\]
associating to each triple of $i$-cells $(x,y,z)$ ``composable'' over $\GrpFl{i-2}$ an \emph{associativity constraint} $(i+1)$\nobreakdash-cell $\ass[2;]{i}{}_{x,y,z}$ such that (in the notations of~\ref{bincompl1},~\ref{bincompl2} and~\ref{assbincompl1})
\[
\begin{aligned}
\Grps{i+1}(\ass[2;]{i}{}_{x,y,z})&=\multbin[1]{i}{\ass[1;\,\Grpt{i}(x),\Grpt{i}(y),\Grpt{i}(z)]{i-1}\,}{\,\Bigl(\multbin[2]{i}{(\multbin[2]{i}{x}{y})}{z}\Bigr)}\smsp,\cr
\Grpt{i+1}(\ass[2;]{i}{}_{x,y,z})&=\multbin[1]{i}{\Bigl( \multbin[2]{i}{x}{(\multbin[2]{i}{y}{z})} \Bigr)\,}{\,\ass[1;\,\Grps{i}(x),\Grps{i}(y),\Grps{i}(z)]{i-1}} \smsp.
\end{aligned}
\]
It is left as an exercise to the reader to proceed to the construction of associativity constraints for higher-level compositions.
\end{subparagr}

\begin{subparagr}
\textbf{Pentagon and exchange constraints.} Similarly, Mac Lane's pentagon and Godement's exchange rule give rise to ``higher'' constraints, defined by choosing suitable pairs of parallel arrows (see~\cite[4.2.4]{Ara}). 
\end{subparagr}

\begin{subparagr} \label{units}
\textbf{Units.} Let $i\geqslant0$. The most trivial pair of parallel arrows is $(\id{\Dn{i}},\id{\Dn{i}})$. It gives rise to a lifting $\counit{i}$ such that
\[
\xymatrixrowsep{2.7pc}
\xymatrixcolsep{4.7pc}
\xymatrix{
\Dn{i+1}\ar@<.9ex>[rd]^{\counit{i}}
&
& \ar@{}@<-7ex>[r]^{\textstyle \counit{i}\Ths{i+1}=\id{\Dn{i}}=\counit{i}\Tht{i+1}\smsp.} 
&
\\
\,\Dn{i} \ar@<1ex>[u]^{\Ths{i+1}} \ar@<-1ex>[u]_{\Tht{i+1}}
           \ar@<1ex>[r]^-{\id{\Dn{i}}} \ar@<-1ex>[r]_-{\id{\Dn{i}}}
&\Dn{i}
}
\]
It defines a map 
\[
\unit{i}:\,=\Grp(\counit{i}):\GrpFl{i}\toto\GrpFl{i+1}\smsp,
\] 
associating to each $i$\nobreakdash-cell $x$ a \emph{unit} $(i+1)$\nobreakdash-cell $\varunit{x}:\,=\unit{i}(x)$ such that
\[
\Grps{i+1}(\varunit{x})=x=\Grpt{i+1}(\varunit{x})\smsp
\]
(the name of unit and the notation being justified by what follows).
\end{subparagr}

\begin{subparagr} \label{unitcons}
\textbf{Unit constraints.} 
Let $i\geqslant1$. Observe (using the notations of~\ref{bincompl1} and~\ref{units}) that
\[
\bigl((\Tht{i}\counit{i-1},\id{\Dn{i}})\comultbin{i}, \id{\Dn{i}}\bigr)\hhbox{and}
\bigl((\id{\Dn{i}},\Ths{i}\counit{i-1})\comultbin{i},\id{\Dn{i}}\bigr)
\]
are pairs of parallel arrows, hence liftings $\counitlcontr{i},\counitrcontr{i}:\Dn{i+1}\to\Dn{i}$ such that
\[
\begin{aligned}
\counitlcontr{i}\Ths{i+1}=(\Tht{i}\counit{i-1},\id{\Dn{i}})\comultbin{i}\smsp,\qquad
\counitlcontr{i}\Tht{i+1}=\id{\Dn{i}}\smsp,\cr
\noalign{\vskip 3pt}
\counitrcontr{i}\Ths{i+1}=(\id{\Dn{i}},\Ths{i}\counit{i-1})\comultbin{i}\smsp,\qquad
\counitrcontr{i}\Tht{i+1}=\id{\Dn{i}}\smsp.
\end{aligned}
\]
We deduce maps
\[
\xymatrixcolsep{4pc}
\xymatrix{
\GrpFl{i}\ar[r]^-{\unitlcontr{i}:\,=\Grp(\counitlcontr{i})}
&\GrpFl{i+1}
}
\smsp,\quad
\xymatrixcolsep{4pc}
\xymatrix{
\GrpFl{i}\ar[r]^-{\unitrcontr{i}:\,=\Grp(\counitrcontr{i})}
&\GrpFl{i+1}
}
\]
associating to each $i$\nobreakdash-cell $x$ a \emph{left}, respectively \emph{right}, \emph{unit constraint} $(i+1)$\nobreakdash-cell $\unitlcontr{i}(x)$, respectively $\unitrcontr{i}(x)$, such that (in the notations of~\ref{bincompl1} and~\ref{units})
\[
\begin{aligned}
\Grps{i+1}(\unitlcontr{i}(x))&=\multbin[1]{i}{\varunit{\Grpt{i}(x)}\kern 1pt}{\kern 1ptx}\smsp,\qquad
\Grpt{i+1}(\unitlcontr{i}(x))=x\smsp,\cr
\Grps{i+1}(\unitrcontr{i}(x))&=\multbin[1]{i}{x\kern 1pt}{\kern 1pt\varunit{\Grps{i}(x)}}\smsp,\qquad
\Grpt{i+1}(\unitrcontr{i}(x))=x\smsp.
\end{aligned}
\]
\end{subparagr}

\begin{subparagr} \label{trianglecons}
\textbf{Triangle constraint.} In a similar way a \emph{triangle} higher constraint can be defined, involving associativity, left and right unit constraints (see~\cite[4.2.4]{Ara}).
\end{subparagr}

\begin{subparagr} \label{invl1}
\textbf{Level 1 inverse.} 
For every $i\geqslant1$, $(\Tht{i},\Ths{i})$ is a pair of parallel arrows, hence a lifting $\coinv{i}:\,=\coinv[1]{i}:\Dn{i}\to\Dn{i}$ such that
\[
\coinv{i}\Ths{i}=\Tht{i}\hhbox{and}\coinv{i}\Tht{i}=\Ths{i}\smsp.
\]
We deduce a map $\inv{i}:\,=\inv[1]{i}:\,=\Grp(\coinv[1]{i}):\GrpFl{i}\to\GrpFl{i}$, associating to a $i$\nobreakdash-cell an ``\emph{inverse}'' $i$\nobreakdash-cell $\varinv{x}:\,=\inv[1]{i}(x)$ such that
\[
\Grps{i}(\varinv{x})=\Grpt{i}(x)\hhbox{and}\Grpt{i}(\varinv{x})=\Grps{i}(x)\smsp.
\]
(the name of ``inverse'' and the notation being justified by what follows).
\end{subparagr}

\begin{subparagr}
\textbf{Level 1 inverse constraints.} \label{invconsl1}
Let $i\geqslant1$. Observe (using the notations of~\ref{bincompl1}, \ref{units} and~\ref{invl1}) that
\[
\bigl((\coinv{i},\id{\Dn{i}})\comultbin{i},\Ths{i}\counit{i-1}\bigr)\hhbox{and}
\bigl((\id{\Dn{i}},\coinv{i})\comultbin{i},\Tht{i}\counit{i-1}\bigr)
\]
are pairs of parallel arrows, hence liftings $\coinvlcontr{i},\coinvrcontr{i}:\Dn{i+1}\to\Dn{i}$ such that
\[
\begin{aligned}
\coinvlcontr{i}\Ths{i+1}=(\coinv{i},\id{\Dn{i}})\comultbin{i}\smsp,\qquad
\coinvlcontr{i}\Tht{i+1}=\Ths{i}\counit{i-1}\smsp,\cr
\noalign{\vskip 3pt}
\coinvrcontr{i}\Ths{i+1}=(\id{\Dn{i}},\coinv{i})\comultbin{i}\smsp,\qquad
\coinvrcontr{i}\Tht{i+1}=\Tht{i}\counit{i-1}\smsp.
\end{aligned}
\]
We deduce maps
\[
\xymatrixcolsep{4pc}
\xymatrix{
\GrpFl{i}\ar[r]^-{\invlcontr{i}:\,=\Grp(\coinvlcontr{i})}
&\GrpFl{i+1}
},
\,\quad
\xymatrixcolsep{4pc}
\xymatrix{
\GrpFl{i}\ar[r]^-{\invrcontr{i}:\,=\Grp(\coinvrcontr{i})}
&\GrpFl{i+1}
},
\]
associating to each $i$\nobreakdash-cell $x$ a \emph{left}, respectively \emph{right}, \emph{inverse constraint} $(i+1)$\nobreakdash-cell $\invlcontr{i}(x)$, respectively $\invrcontr{i}(x)$, such that (in the notations of~\ref{bincompl1}, \ref{units} and~\ref{invl1})
\[
\begin{aligned}
\Grps{i+1}(\invlcontr{i}(x))&=\multbin[1]{i}{\varinv{x}}{\kern 2pt x}\smsp,\qquad
\Grpt{i+1}(\invlcontr{i}(x))=\varunit{\Grps{i}(x)}\smsp,\cr
\Grps{i+1}(\invrcontr{i}(x))&=\multbin[1]{i}{x\kern 1pt}{\kern 1pt\varinv{x}}\smsp,\qquad
\Grpt{i+1}(\invrcontr{i}(x))=\varunit{\Grpt{i}(x)}\smsp.
\end{aligned}
\]
\end{subparagr}

\begin{subparagr} \label{invl2}
\textbf{Level 2 inverse.} 
Let $i\geqslant2$. Observe (using the notations of~\ref{invl1}) that $(\Ths{i}\coinv[1]{i-1},\Tht{i}\coinv[1]{i-1})$ is a pair of parallel arrows, hence a lifting $\coinv[2]{i}:\Dn{i}\to\Dn{i}$ defining a map $\inv[2]{i}:\,=\Grp(\coinv[2]{i}):\GrpFl{i}\to\GrpFl{i}$, associating to a $i$\nobreakdash-cell $x$ a ``level 2 inverse'' $i$\nobreakdash-cell $\inv[2]{i}(x)$ such that
\[
\Grps{i}(\inv[2]{i}(x))=\inv[1]{i-1}(\Grps{i}(x))\hhbox{and}
\Grpt{i}(\inv[2]{i}(x))=\inv[1]{i-1}(\Grpt{i}(x))\smsp.
\]
\end{subparagr}

\begin{subparagr} \label{invll}
\textbf{Level $l$ inverse.} The above construction can be iterated in order to obtain by induction on $l\geqslant2$, for every $i\geqslant l$, a map $\coinv[l]{i}:\Dn{i}\to\Dn{i}$, lifting of the pair of parallel arrows $(\Ths{i}\coinv[l-1]{i-1},\Tht{i}\coinv[l-1]{i-1})$, defining a map $\inv[l]{i}:\,=\Grp(\coinv[l]{i}):\GrpFl{i}\to\GrpFl{i}$, associating to a $i$\nobreakdash-cell $x$ a ``level $l$ inverse'' $i$\nobreakdash-cell $\inv[l]{i}(x)$ such that
\[
\Grps{i}(\inv[l]{i}(x))=\inv[l-1]{i-1}(\Grps{i}(x))\hhbox{and}
\Grpt{i}(\inv[l]{i}(x))=\inv[l-1]{i-1}(\Grpt{i}(x))\smsp.
\]
It is left as an exercise to the reader to define level $l$ inverse constraints relating level $l$ inverse with level $l$ binary composition.
\end{subparagr}
\end{paragr}

\section{Grothendieck's conjecture.}
\begin{paragr} \label{homotopygr}
\textbf{The homotopy groups of a $\infty$-groupoid.} Fix a \Gr-coherator $\Coh$ and a $\infty$\nobreakdash-$\Coh$\nobreakdash-groupoid \hbox{$\Grp:\Coh^\op\to\Ens$}. We will freely use the notations of the structural maps introduced in the previous section.
\smallbreak

For every $i\geqslant0$, we define a \emph{homotopy} relation $\hmtp{i}$ between $i$\nobreakdash-cells of $\Grp$ by
\[
x\hmtp{i}y\quad \buildrel\hbox{\scriptsize def}\over{\Longleftrightarrow}\quad \exists h\in \GrpFl{i+1}\quad \Grps{i+1}(h)=x\smsp,\quad \Grpt{i+1}(h)=y\smsp.
\]
The homotopy relation is an equivalence relation:
\smallbreak

\emph{a}) \textbf{Reflexivity.} Let $x$ be a $i$-cell. We have (\cf~\ref{units}):
\[
\Grps{i+1}(\varunit{x})=x\hhbox{and}\Grpt{i+1}(\varunit{x})=x
\quad,
\]
therefore $x\hmtp{i}x$.

\emph{b}) \textbf{Symmetry.} Let $x,\,y$ be two $i$-cells such that $x\hmtp{i} y$. By definition, there exists a $(i+1)$\nobreakdash-cell $h$ such that $\Grps{i+1}(h)=x$ and $\Grpt{i+1}(h)=y$, therefore (\cf~\ref{invl1})
\[
\Grps{i+1}(\varinv{h})=\Grpt{i+1}(h)=y\hhbox{and}\Grpt{i+1}(\varinv{h})=\Grps{i+1}(h)=x\smsp,
\]  
and hence $y\hmtp{i} x$. 
\smallbreak

\emph{c}) \textbf{Transitivity.} Let $x,\,y,\,z$ be three $i$-cells such that $x\hmtp{i}y$ and $y\hmtp{i}z$. By definition, there exist two $(i+1)$\nobreakdash-cells $h,k$ such that 
\[
\Grps{i+1}(h)=x\smsp ,\quad
\Grpt{i+1}(h)=y\smsp,\quad
\Grps{i+1}(k)=y\smsp,\quad
\Grpt{i+1}(k)=z\smsp.\quad
\]
In particular, $(k,h)$ is an element of the (globular) fiber product $\GrpFl{i+1}\times_{\GrpFl{i}}\GrpFl{i+1}$, so $\multbin{}{k}{h}$ is defined and (\cf~\ref{bincompl1})
\[
\Grps{i+1}(\multbin{}{k}{h})=\Grps{i+1}(h)=x\hhbox{and}\Grpt{i+1}(\multbin{}{k}{h})=\Grpt{i+1}(k)=z\smsp,
\]
which proves that $x\hmtp{i}z$.
\smallbreak
\noindent Two $i$-cells $x,\,y$ are called \emph{homotopic} if $x\hmtp{i}y$. We denote by $\GrpFlQ{i}$ the quotient of the set $\GrpFl{i}$ of $i$\nobreakdash-cells by the homotopy equivalence relation $\hmtp{i}$. We define the \emph{set of connected components} of the $\infty$\nobreakdash-$\Coh$\nobreakdash-groupoid $\Grp$ as the set $\Pizero{\Grp}:\,=\GrpFlQ{0}:\,=\GrpFl{0}/\hmtp{0}$.
\smallbreak

Suppose now that $i\geqslant1$ and observe that if $x,\,y$ are two homotopic $i$\nobreakdash-cells, then the globular relations imply that $\Grps{i}(x)=\Grps{i}(y)$ and $\Grpt{i}(x)=\Grpt{i}(y)$. Therefore the maps $\Grps{i},\Grpt{i}:\GrpFl{i}\to\GrpFl{i-1}$ induce maps $\GrpsQ{i},\GrptQ{i}:\GrpFlQ{i}\to\GrpFl{i-1}$. On the other hand, the equivalence relation $\hmtp{i}$ is compatible with the composition \smash{$\multbin{}{}{}=\multbin[1]{i}{}{}$}. Let us prove for example that if $x_1,\,x_2$ are two homotopic $i$\nobreakdash-cells, then for every $i$\nobreakdash-cell $y$ with target the common source of $x_1$ and $x_2$, the $i$\nobreakdash-cells $\multbin{}{x_1}{y}$ and $\multbin{}{x_2}{y}$ are homotopic. By definition, there exists a $(i+1)$\nobreakdash-cell $h$ such that $\Grps{i+1}(h)=x_1$ and $\Grpt{i+1}(h)=x_2$. If we denote by $h'$ the $(i+1)$\nobreakdash-cell 
\[
h'=\multbin[2]{i+1}{h}{\varunit{y}}\smsp, 
\]
then we have (\cf~\ref{bincompl2} and~\ref{units})
\[
\Grps{i+1}(h')= \multbin[1]{i}{\Grps{i+1}(h)}{\Grps{i+1}(\varunit{y})}=\multbin[1]{i}{x_1}{y}
\ \ \hbox{and}\ \
\Grpt{i+1}(h')= \multbin[1]{i}{\Grpt{i+1}(h)}{\Grpt{i+1}(\varunit{y})}=\multbin[1]{i}{x_2}{y}
\smsp,
\]
and hence $\multbin{}{x_1}{y}$ and $\multbin{}{x_2}{y}$ are homotopic. Therefore the map \smash{$\multbin[1]{i}{}{}:\GrpFl{i}\times_{\GrpFl{i-1}}\GrpFl{i}\to\GrpFl{i}$} induces a map $\GrpFlQ{i}\times_{\GrpFl{i-1}}\GrpFlQ{i}\to\GrpFlQ{i}$. The existence of the associativity constraint and the unit constraints (\cf~\ref{assbincompl1} and~\ref{unitcons}) implies that this composition defines a category $\varpi_i(\Grp)$ with object set $\GrpFl{i-1}$, arrow set $\GrpFlQ{i}$, and target and source maps $\GrpsQ{i},\,\GrptQ{i}$. The existence of the inverse constraints (\cf~\ref{invconsl1}) implies that this category is a groupoid. The remark at the end of~\ref{bincompl1} proves that this groupoid is independent of the choice of the lifting $\comultbin[1]{i}$ defining the composition \smash{$\multbin[1]{i}{}{}$}. For more details, see~\cite{Ara}, propositions~4.3.2, 4.3.3.
\smallbreak

Let $x$ be a $0$-cell of $\Grp$, $x\in\GrpFl{0}$, and $i$ an integer, $i\geqslant1$. The $i$-th \emph{homotopy group of $\Grp$ at $x$} is the group 
\[
\pi_i(\Grp;x):\,=\Hom_{\varpi_i(\Grp)}(\unit{}(x),\unit{}(x))\smsp,
\]
where using the notations of~\ref{units}, $\unit{}=\unit{i-2}\dots\unit{1}\unit{0}$. In order to justify this\break definition, it should be verified that this group is, up to canonical isomorphism, independent of the choice of the lifting arrows $\counit{j}$, $0\leqslant j\leqslant i-2$, giving rise to the maps~$\unit{j}$.
\end{paragr}

\begin{paragr} \label{infeq}
\textbf{Weak equivalences of $\infty$-groupoids.} Fix a \Gr-coherator $\Coh$. A morphism $\GrpMor:\Grp\to\Grp'$ of $\infty$\nobreakdash-$\Coh$\nobreakdash-groupoids is called  a \emph{weak equivalence} or a $\infty$\nobreakdash-\emph{equivalence} if the following two conditions are satisfied:
\begin{itemize}
\item[\emph{a})] the map $\Pizero{\GrpMor}:\Pizero{\Grp}\to\Pizero{\Grp'}$, induced by $\GrpMor$, is a bijection;
\item[\emph{b})] for every $i\geqslant1$ and every $0$\nobreakdash-cell $x$ of $\Grp$, the map 
\[
\pi_i(\GrpMor;x):\pi_i(\Grp;x)\to\pi_i(\Grp';\GrpMor(x))\smsp,
\] 
induced by $\GrpMor$, is an isomorphism of groups.
\end{itemize}
\end{paragr}

\begin{paragr} \label{Grothconjfaible}
\textbf{Grothendieck's conjecture (weak form).} 
\emph{For every \Gr-coherator $\Coh$, the localization of the category of $\infty$\nobreakdash-$\Coh$\nobreakdash-groupoids by the $\infty$\nobreakdash-equivalences is equivalent to the homotopy category of $\mathsf{CW}$\nobreakdash-complexes.}

For a strategy for proving this conjecture, see~\cite{Ma1}. A more precise form of this conjecture is given below (\cf~\ref{Grothconjfort}).
\end{paragr}

\begin{paragr} \label{topglob}
\textbf{The topological realization of the category of globes.}
For $i\geqslant0$, let $\TopD{i}$ be the $i$-dimensional topological disk
\[
\TopD{i}=\{x\in\mathbb{R}^i\mid\,\Vert x\Vert\leq1\}\smsp,
\]
where $\Vert x\Vert$ denotes the Euclidian norm of $x$, and for $i>0$,
\[
\Tops{i},\Topt{i}:\TopD{i-1}\,{{\raise 1.5pt\vbox{\xymatrixcolsep{1.5pc}\xymatrix{\ar@{^{(}->}[r]&}}\hskip -2pt}}\,\TopD{i}\smsp
\]
the inclusions defined by 
\[
\Tops{i}(x)= \bigl(x,\,-\sqrt{1-\Vert x\Vert^2}\,\bigr)
\smsp,\qquad
\Topt{i}(x)= \bigl(x,\,\sqrt{1-\Vert x\Vert^2}\,\bigr)
\smsp,\qquad x\in \TopD{i-1}\smsp.
\]
The maps $\Tops{i}$ and $\Topt{i}$ factorize through the $(i-1)$\nobreakdash-dimensional sphere, boundary of~$\TopD{i}$,
\[
\Sphere{i-1}=\bordD{i}=\{x\in\mathbb{R}^i\mid\,\Vert x\Vert=1\}\smsp,
\]
identifying $\TopD{i-1}$ to the lower respectively upper hemisphere of $\Sphere{i-1}$. As the maps $\Tops{i}$, $\Topt{i}$, $i>0$, satisfy the coglobular relations, the assignment 
\[
\Dn{i}\longmapsto\TopD{i}\smsp,\quad\Ths{i}\longmapsto\Tops{i}\smsp,\quad\Tht{i}\longmapsto\Topt{i}\smsp
\]
defines a functor $\Glob\to\Top$ from the category of globes (\cf~\ref{catglob}) to the category of topological spaces. It is easy to verify that this functor is faithful, identifying $\Glob$ to the (non-full) subcategory of $\Top$ with objects the disks $\TopD{i}$, $i\geqslant0$, and morphisms generated by the inclusions $\Tops{i}$, $\Topt{i}$, $i>0$.
\end{paragr}

\begin{paragr} \label{topglobext}
\textbf{Topological spaces as a globular extension.}
As the category $\Top$ of topological spaces is cocomplete, $\Top$ endowed with the functor $\Glob\to\Top$, defined above, is a globular extension (\cf~\ref{extglob}). By the universal property of $\Thetazero$, there exists a morphism of globular extensions $\Thetazero\to\Top$, unique up to unique natural isomorphism, extending the functor $\Glob\to\Top$. It is easy to verify that this functor is faithful, identifying $\Thetazero$ to the (non-full) subcategory of $\Top$ with objects globular sums of disks and morphisms continuous maps 
\[
\xymatrixcolsep{1.8pc}
\xymatrix{
X=\TopD{i_1}\!\amalg^{}_{\TopD{i'_1}}\!\!\TopD{i_2}\!\amalg^{}_{\TopD{i'_2}}\!\cdots\,\,\amalg^{}_{\TopD{i'_{m-1}}}\!\!\!\!\TopD{i_m}
\ar@<1ex>[r]^-{\varphi}
&\TopD{j_1}\!\amalg^{}_{\TopD{j'_1}}\!\!\TopD{j_2}\!\amalg^{}_{\TopD{j'_2}}\!\cdots\,\,\amalg^{}_{\TopD{j'_{n-1}}}\!\!\!\!\TopD{j_n}=Y
}
\]
such that for every $k$, $1\leqslant k\leqslant m$, there exists an integer $l$, $1\leqslant l\leqslant n$, and a commutative square
\[
\xymatrix{
&\TopD{i_k}\ar[r]^{\psi} \ar[d]_{\mathrm{can}^{}_k}
&\TopD{i_l}\ar[d]^{\mathrm{can}^{}_l}
\\
&X\ar[r]_{\varphi}
&Y
&\hskip -30pt,
}\hskip 10pt
\]
with $\psi$ the image of an arrow of $\Glob$, \ie~a composite of $\Tops{i}$'s or $\Topt{i}$'s. All objects of this subcategory are contractible spaces.
\end{paragr}

\begin{paragr} \label{fondinfgrpd}
\textbf{\boldmath The fundamental $\infty$-groupoid of a space.}
Let $\Coh$ be a \Gr-coherator. In order to associate functorially to every topological space a $\infty$\nobreakdash-$\Coh$\nobreakdash-groupoid, it's enough to define a $\infty$\nobreakdash-\emph{cogroupoid} of type $\Coh$ in $\Top$, \ie~a functor
\[
\Coh\toto\Top
\]
preserving globular sums. The \emph{fundamental} $\infty$\nobreakdash-\emph{groupoid} of a topological space $X$ can then be defined as the composite of the opposite functor
\[
\Coh^\op\toto\Top^\op
\]
with the representable presheaf on $\Top$ defined by $X$
\[
\Hom^{}_{\Top}(\,?\,,X):\Top^\op\toto\Ens\smsp.
\]
Let
\[
\xymatrixcolsep{1.4pc}
\xymatrix{
\Glob \ar[r]
&\Thetazero=\Coh_0\ar[r]
&\Coh_1\ar[r]
&\cdots\ar[r]
&\Coh_n\ar[r]
&\Coh_{n+1}\ar[r]
&\cdots\ar[r]
&\Coh\simeq\varinjlim\Coh_n
}\smsp
\]
be a tower of definition of the \Gr\nobreakdash-coherator $\Coh$. We have already defined a functor $\Thetazero\to\Top$ preserving globular sums. In order to prove that this functor can be extended to a functor $\Coh\to\Top$ preserving globular sums, \ie~to a morphism of globular extensions, it's enough to prove that for every $n\geqslant0$, any morphism of globular extensions $\Coh_n\to\Top$ can be extended to a morphism of globular extensions $\Coh_{n+1}\to\Top$. Using the universal property of $\Coh_{n+1}$, it's enough to prove that for every globular sum $X$ in $\Top$, every $i\geqslant0$, and every pair $(f,g):\TopD{i}\to X$ of continuous maps such that if $i>0$, we have
\[
f\Tops{i}=g\Tops{i}\hhbox{and}f\Topt{i}=g\Topt{i}\smsp,
\]
there exists a continuous map $h:\TopD{i+1}\to X$ such that
\[
f=h\Tops{i+1}\hhbox{and}g=h\Topt{i+1}\smsp.
\]
\[
\xymatrixcolsep{7pc}
\xymatrixrowsep{3.3pc}
\xymatrix{
\ \TopD{i+1}\ar@<1ex>@{-->}[dr]^h
\\
\TopD{i}\ar@<1ex>[r]^f\ar@<-1ex>[r]_g
\ar@<1ex>[u]^{\Tops{i+1}}\ar@<-1ex>[u]_{\Topt{i+1}}
&X
}
\]
As $X$ is a contractible space, the map $X\to *$ of $X$ to the point is a trivial fibration. Therefore, as the inclusion $\Sphere{i}\hookrightarrow \TopD{i+1}$ is a cofibration, the existence of such a map $h$ is a consequence of the lifting  property of the following square
$$
\xymatrixcolsep{2pc}
\xymatrixrowsep{2.7pc}
\xymatrix{
\Sphere{i}=\TopD{i}\amalg_{\Sphere{i-1}}\TopD{i}\ar@{^{(}->}[d]_{(\Tops{i+1},\Topt{i+1})}\ar[rr]^(.65){(f,g)}
&&X\ar[d]
\\
\ \TopD{i+1}\ar@{-->}[rru]^h\ar[rr]
&&\hbox{*}
&.
}
$$
The functor $\Coh\to\Top$ defined this way is \emph{not} independent of the choice of the relevant lifting maps $h$. Nevertheless, it is conjectured that this dependence is inessential (\cf~\ref{Grothconjfort}). As the functor $\Thetazero\to\Top$ is faithful, the existence of the extension $\Coh\to\Top$ implies that the functor $\Thetazero\to\Coh$ is faithful. It is not known if the functors $\Coh_n\to\Top$ extending $\Thetazero\to\Top$ can be chosen faithful, which would imply the conjecture that the functors $\Coh_n\to\Coh$ are faithful (\cf~\ref{cohgr}).
\end{paragr}

\begin{paragr}
\textbf{\boldmath The classifying space of a $\infty$-groupoid.} 
Fix a \Gr-coherator $\Coh$ and choose, as above, an extension $\Coh\to\Top$ of the canonical functor $\Thetazero\to\Top$, preserving globular sums. As the category $\Top$ is cocomplete, this functor induces a pair of adjoint functors
\[
\pref{\Coh}\toto\Top\smsp,\qquad\Top\toto\pref{\Coh}\smsp
\]
between the category of presheaves on $\Coh$ and the category of topological spaces. The functor $\Top\to\pref{\Coh}$ factors through the full subcategory $\GrpCat{\Coh}$ of $\pref{\Coh}$, of $\infty$\nobreakdash-$\Coh$\nobreakdash-groupoids, associating to a topological space $X$ its fundamental $\infty$\nobreakdash-groupoid, defined in the previous section.
\[
\xymatrixcolsep{2.5pc}
\xymatrixrowsep{1.5pc}
\xymatrix{
\Top\ar[r]\ar[rd]_{\infGrpf}
&\pref{\Coh}
\\
&\GrpCat{\Coh}\ar@{^{(}->}[u] 
}
\]
Therefore there is an induced pair of adjoint functors
\[
\GrpCat{\Coh}\Toto{1.8}{\EspClass}\Top\smsp,
\qquad
\Top\Toto{1.8}{\infGrpf}\GrpCat{\Coh}\smsp,
\]
where $\EspClass$ is the restriction of the functor $\pref{\Coh}\to\Top$ to the subcategory $\GrpCat{\Coh}$ of $\pref{\Coh}$. For a $\infty$\nobreakdash-$\Coh$\nobreakdash-groupoid $\Grp$, the topological space $\EspClass(\Grp)$ is called the \emph{classifying space} of $\Grp$.
\end{paragr}

\begin{paragr} \label{Grothconjfort}
\textbf{Grothendieck's conjecture (precise form).}
\emph{The functors $\EspClass$ and $\infGrpf$ are compatible with the weak equivalences of $\infty$\nobreakdash-groupoids and of spaces, and induce mutually quasi-inverse equivalences of the localized categories. Furthermore, different extensions $\Coh\to\Top$ of the functor $\Thetazero\to\Top$ induce isomorphic functors between the localized categories.}
\end{paragr}

\begin{paragr}
\textbf{\boldmath The $\infty$-groupoid of an object in a model category.} A similar construction can be done in any Quillen model category, such that \emph{all objects are fibrant} (\cite{PS}, section~12). For more details on this construction, see~\cite[section~4.4]{Ara}.
\end{paragr}

\section{Coherators and weak factorization systems}

\begin{paragr} \label{liftprop}
\textbf{Lifting properties.}
Fix a category $\CC$. Recall that given two arrows $i:A\to B$ and $p:X\to Y$ of $\CC$, $i$ \emph{has the left lifting property with respect to} $p$, or equivalently $p$ \emph{has the right lifting property with respect to} $i$, if for every 
commutative solid arrow square
\[
\xymatrix{
&A\ar[r]^{a}\ar[d]_{i}
&X\ar[d]^{p}
\\
&B\ar[r]_{b}\ar@{-->}[ru]^{h}
&Y
&\hskip -30pt,
}
\]
there exists a dotted arrow such that the total diagram is commutative:
\[
a=hi\hhbox{and} b=ph\smsp.
\]
If $\Classarr$ is a class of arrows of $\CC$, we denote by $\llp{\Classarr}$ (resp. $\rlp{\Classarr}$) the class of arrows of $\CC$ having the left (resp. right) lifting property with respect to all arrows in $\Classarr$. The classes $\llp{\Classarr}$ and  $\rlp{\Classarr}$ are stable under composition and retracts. Sometimes arrows in $\rlp{\Classarr}$ are called $\Classarr$\nobreakdash-\emph{fibrations} and arrows in $\cof{\Classarr}\!:\,=\llp{\rlp{\Classarr}}$, $\Classarr$\nobreakdash-\emph{cofibrations}, and if $\CC$ has a final object $*$ (resp. an initial object $\varnothing$), an object $X$ of $\CC$ is called $\Classarr$\nobreakdash-\emph{fibrant} (resp. $\Classarr$\nobreakdash-\emph{cofibrant}) if the map $X\to*$ (resp. $\varnothing\to X$) is a $\Classarr$\nobreakdash-\emph{fibration} (resp. a $\Classarr$\nobreakdash-\emph{cofibration}).
\end{paragr}

\begin{paragr} \label{wfs}
\textbf{Weak factorization systems.}
A \emph{weak factorization system} in $\CC$ is a pair $(\LClassarr,\RClassarr)$ of classes of arrows of $\CC$ such that the following conditions are satisfied:
\begin{itemize}
\item[\emph{a})] $\LClassarr$ and $\RClassarr$ are stable under retracts;
\item[\emph{b})] arrows in $\LClassarr$ have the left lifting property with respect to arrows in $\RClassarr$ (or equivalently arrows in $\RClassarr$ have the right lifting property with respect to arrows in $\LClassarr$);
\item[\emph{c})] every arrow $f$ in $\CC$ can be factored as $f=pi$, with $i\in\LClassarr$ and $p\in\RClassarr$.
\end{itemize}
It is well known that the conjunction of conditions (\emph{a}) and (\emph{b}) is equivalent to the conjunction of equalities
\[
\LClassarr=\llp{\RClassarr}\hhbox{and}\RClassarr=\rlp{\LClassarr}\smsp.
\]
\end{paragr} 

\begin{paragr} \label{celmaps}
\textbf{Cellular maps.}
Let $\CC$ be a cocomplete category and $\Classarr$ a class of arrows in~$\CC$. A $\Classarr$\nobreakdash-\emph{cellular} map in $\CC$ is a map obtained as transfinite composition of pushouts of (small) sums of arrows in $\Classarr$. The class of $\Classarr$\nobreakdash-cellular maps in $\CC$ is denoted by $\cell{\Classarr}$. It is easy to verify that $\cell{\Classarr}$ is stable under pushouts, sums and transfinite composition, and that it contains isomorphisms. It is the smallest class containing $\Classarr$ and stable under pushouts and transfinite composition. It is equal to the class of maps obtained by transfinite composition of pushouts of arrows in~$\Classarr$. There is an inclusion 
\[
\cell{\Classarr}\subset\cof{\Classarr}=\llp{\rlp{\Classarr}}\smsp.
\]
An object $X$ of $\CC$ is called $\Classarr$\nobreakdash-\emph{cellular} if the map from the initial object of $\CC$ to $X$ is a $\Classarr$\nobreakdash-cellular map.
\end{paragr}

\begin{paragr} \label{smallobjarg}
\textbf{Set of arrows admissible for the small object argument.}
Let $\CC$ be a cocomplete category and $\Setarr$ a (small) set of arrows in $\CC$. The set of arrows $\Setarr$ is said to be \emph{admissible for the small object argument}, or more simply \emph{admissible}, if there exists a well ordered set $\wellord$ satisfying the following conditions:
\begin{itemize}
\item[\emph{a})] $\wellord$ does \emph{not} have a maximal element;
\item[\emph{b})] for every $\wellord$\nobreakdash-diagram $(X_j)_{j\in\wellord}$ such that all maps $X_j\to X_{j'}$, $j\leqslant j'$, are $\Setarr$\nobreakdash-cellular maps, and every arrow $A\to B$ in $\Setarr$, the canonical map
\[
\varinjlim_j\Hom(A,X_j)\toto\Hom(A,\varinjlim_j X_j)
\]
is a bijection.
\end{itemize}
For example, if all domains of maps in $\Setarr$ are $\alpha$\nobreakdash-\emph{presentable} for some regular cardinal $\alpha$, then $\Setarr$ is admissible. In particular if $\CC$ is \emph{locally presentable}, then \emph{any} (small) set of arrows in $\CC$ is admissible. Recall the small object argument:
\end{paragr}

\begin{prop} \label{smallobjargprop}
Let $\CC$ be a cocomplete category and $\Setarr$ a (small) set of arrows in $\CC$. If $\Setarr$ is admissible for the small object argument, then every arrow $f$ of $\CC$ can be factored as $f=pi$, with $i\in\cell{\Setarr}$ and $p\in\rlp{\Setarr}$. In particular,
\[
(\cof{\Setarr},\rlp{\Setarr})\qquad(\cof{\Setarr}=\llp{\rlp{\Setarr}})
\] 
is a weak factorization system. Moreover, $\cof{\Setarr}$ is the smallest class of arrows in $\CC$ containing $\Setarr$ stable under pushouts, transfinite composition and retracts, and it is equal to the class of retracts of arrows in $\cell{\Setarr}$.
\end{prop}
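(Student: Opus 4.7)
\medbreak\noindent\textbf{Proof proposal.} The plan is to carry out Quillen's small object argument, using the well-ordered set $\wellord$ witnessing admissibility of $\Setarr$ as the index for a transfinite construction. Given an arrow $f:X\to Y$, I would build inductively a $\wellord$-diagram $(X_j,p_j:X_j\to Y)$ under $X$, starting with $X_0=X$ and $p_0=f$. At a successor stage $\wo+1$, consider the set $S_\wo$ of all commutative squares
\[
\xymatrix{
A\ar[r]\ar[d]_{a}&X_\wo\ar[d]^{p_\wo}\\
B\ar[r]&Y
}
\]
with $a:A\to B$ in $\Setarr$, form the sum $\coprod_{S_\wo}a$, and define $X_{\wo+1}$ by the pushout of this sum along the induced map $\coprod A\to X_\wo$; the universal property produces the structure map $p_{\wo+1}:X_{\wo+1}\to Y$. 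At a limit stage, take $X_\wo=\varinjlim_{\wo'<\wo}X_{\wo'}$ with the induced map to $Y$. Finally set $X_\infty=\varinjlim_{\wo\in\wellord}X_\wo$, and let $i:X\to X_\infty$ and $p:X_\infty\to Y$ be the induced arrows, so that $f=pi$.

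By construction each transition $X_\wo\to X_{\wo+1}$ is a pushout of a sum of arrows in $\Setarr$, hence $i\in\cell{\Setarr}$ by definition of cellular maps. The essential step is to show $p\in\rlp{\Setarr}$: given $a:A\to B$ in $\Setarr$ and a square $(u,v):a\to p$, the admissibility hypothesis (b) in~\ref{smallobjarg} applied to the $\wellord$-diagram $(X_j)$ (whose transition maps are $\Setarr$-cellular by the previous remark) implies that $u:A\to X_\infty$ factors through some $X_\wo$, say as $u=\alpha_\wo\circ u'$ with $u':A\to X_\wo$. The pair $(u',v)$ is then an element of $S_\wo$, so by construction the canonical map $B\to X_{\wo+1}$ associated to this square provides, after composition with $X_{\wo+1}\to X_\infty$, a lift $h:B\to X_\infty$ satisfying $hi_B=u$ (where I write $i_B$ for the coproduct injection) and $ph=v$, which is exactly the required lifting. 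This proves the factorization clause~(c) of~\ref{wfs} with $i\in\cell{\Setarr}\subset\cof{\Setarr}$ and $p\in\rlp{\Setarr}$.

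For the rest: since $\cell{\Setarr}\subset\cof{\Setarr}=\llp{\rlp{\Setarr}}$ by~\ref{celmaps}, and since $\cof{\Setarr}$ is stable under retracts, checking the weak factorization system axioms for $(\cof{\Setarr},\rlp{\Setarr})$ reduces to~\ref{wfs}(b) and~(c), both of which are now in hand. To identify the class of retracts of $\cell{\Setarr}$-maps with $\cof{\Setarr}$, I would run the classical \emph{retract argument}: any $f\in\cof{\Setarr}$ factors as $f=pi$ with $i\in\cell{\Setarr}$ and $p\in\rlp{\Setarr}$, and the lifting property of $f$ against $p$ produces a commutative diagram exhibiting $f$ as a retract of $i$. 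For the minimality statement, let $\mathcal{F}$ be any class containing $\Setarr$ and stable under pushouts, transfinite composition and retracts; stability under pushouts and transfinite composition forces $\cell{\Setarr}\subset\mathcal{F}$ (pushouts of sums are pushouts followed by transfinite composition, noting that sums are transfinite composition of pushouts of the initial map), and then stability under retracts together with the retract description gives $\cof{\Setarr}\subset\mathcal{F}$.

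The main obstacle is the verification that $p\in\rlp{\Setarr}$: this is the point where the admissibility hypothesis is used in an essential way, and care is needed to phrase the argument so that the factoring of $u:A\to X_\infty$ through some finite stage $X_\wo$ really does produce an actual lift (not just a lift up to further composition). Everything else amounts to standard bookkeeping with pushouts, transfinite compositions and retracts.
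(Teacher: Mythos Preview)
Your proof is correct and is the standard Quillen small object argument. Note, however, that the paper does not actually prove this proposition: it is stated immediately after the phrase ``Recall the small object argument:'' and is treated as a well-known result, with no proof given. So there is nothing to compare against; your write-up simply supplies the classical proof that the paper takes for granted.
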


\begin{paragr} \textbf{Finitely presentable objects.}
Recall that an object $A$ of $\CC$ is called \emph{finitely presentable} if it is $\aleph_0$\nobreakdash-presentable, \ie~if for every small \emph{filtered} category $\filt$, and every functor $X:\filt\to\CC$, the canonical map
\[
\varinjlim_\filt\Hom(A,X)\toto\Hom(A,\varinjlim_\filt X)
\]
is a bijection. When the domains of the arrows in a (small) set $\Setarr$ are finitely presentable, $\Setarr$ is in particular admissible, and the following lemma gives a more precise description of $\cell{\Setarr}$: 
\end{paragr}

\begin{lemme} \label{lemmeclef}
Let $\CC$ be a cocomplete category and $\Setarr$ a (small) set of arrows in $\CC$. If the domains of the arrows in $\Setarr$ are finitely presentable, then an arrow $X\to Y$ is in $\cell{\Setarr}$ if and only if there exists a sequence of maps
\[
\xymatrixcolsep{1.8pc}
\xymatrix{
X=X_0\ar[r]^-{i_0}
&X_1\ar[r]^-{i_1}
&\cdots\ar[r]^-{i_{n-1}}
&X_n\ar[r]^-{i_n}
&X_{n+1}\ar[r]^-{i_{n+1}}
&\cdots\ar[r]
&Y\simeq\varinjlim X_n
}\smsp,
\]
identifying $X\to Y$ to the canonical map $X=X_0\to\varinjlim X_n\simeq Y$, such that for every $n\geqslant0$, the arrow $i_n$ is a pushout of a (small) sum of arrows in $\Setarr$.
\end{lemme}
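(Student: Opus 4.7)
The ``if'' direction is immediate: a sequence of the described form is a transfinite composition of length $\omega$ of pushouts of sums of arrows in $\Setarr$, hence lies in $\cell{\Setarr}$.

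For the converse, write the cellular map $X\to Y$ as a transfinite composition $(Z_\alpha)_{\alpha\leqslant\lambda}$ with $Z_0=X$, $Z_\lambda=Y$, $Z_\beta=\varinjlim_{\alpha<\beta}Z_\alpha$ at limits, and each $Z_{\alpha+1}$ a pushout of $\coprod_{i\in I_\alpha}(A^\alpha_i\to B^\alpha_i)$ along some attaching map, with $A^\alpha_i\to B^\alpha_i\in\Setarr$. I shall call a pair $c=(\alpha,i)$ a \emph{cell}, with attaching map $A_c\to Z_{\alpha_c}$. The plan is to assign to each cell a finite \emph{rank} $\ell(c)\in\mathbb{N}$ in such a way that setting $X_n:=X\cup\{c:\ell(c)<n\}$ produces the required $\omega$-sequence: $X_0=X$, each $X_n\to X_{n+1}$ is a pushout of $\coprod_{\ell(c)=n}(A_c\to B_c)$ along the sum of the attaching maps (which land in $X_n$ by the rank condition), and $\varinjlim_n X_n=Y$.

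To produce the rank, I first show by transfinite induction on $\alpha$ that every cell $c$ has a \emph{finite support}: a finite down-closed set $S(c)$ of earlier cells such that $A_c\to Z_{\alpha_c}$ factors through the sub-complex $Z_{S(c)}:=X\cup S(c)$. The inductive hypothesis ensures that the down-closure of every finite set of cells at stages $<\alpha$ is itself finite, which allows one to exhibit $Z_\alpha$ as the filtered colimit of the sub-complexes $Z_T$ indexed by the poset of finite down-closed subsets $T\subseteq\{c:\alpha_c<\alpha\}$. By finite presentability of the domain $A^\alpha_i$, each attaching map $A^\alpha_i\to Z_\alpha$ then factors through some $Z_{T_0}$, and we set $S((\alpha,i)):=T_0$. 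Once supports are finite, the well-founded recursion $\ell(c):=\sup\{\ell(c')+1:c'\in S(c)\}$ (with $\sup\emptyset=0$) delivers a finite rank for every cell.

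The principal technical obstacle is propagating the filtered-colimit presentation $Z_\alpha=\varinjlim_T Z_T$ through the transfinite induction. At limit ordinals this is a routine cofinality argument, but at successor ordinals $\beta+1$, where $Z_{\beta+1}$ is a pushout rather than a filtered colimit, one must combine the fact that pushouts commute with filtered colimits with the crucial observation that each individual attaching map $A^\beta_i\to Z_\beta$ factors through a finite $Z_T$ by finite presentability---even though the whole coproduct $\coprod_i A^\beta_i$ generally does not. The appropriate filtered system at stage $\beta+1$ consists of finite down-closed sets $T'$ at stages $\leqslant\beta$ containing finitely many stage-$\beta$ cells together with their supports; a direct universal-property check then identifies $\varinjlim_{T'}Z_{T'}$ with $Z_{\beta+1}$ and completes the induction.
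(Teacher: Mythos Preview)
Your approach is correct and genuinely different from the paper's. The paper (Appendix~A) proceeds via an auxiliary notion of \emph{$I$-special} commutative squares: a square is $I$-special when its bottom row is obtained from its top row by pushing out along some arrow in $\mathsf{imds}(I)$ (the class of pushouts of sums of arrows in $I$), compatibly with the given vertical maps. Two lemmas establish that cocartesian squares are $I$-special, that $I$-special squares propagate membership in $\mathsf{imds}(I)$ from top to bottom, and that they are stable under transfinite vertical composition. The main construction then builds, by transfinite induction on the original index $j$, an $\omega$-sequence $(Y_{j,n})_{n\geqslant0}$ with $Y_{j,0}=X$ and $\varinjlim_n Y_{j,n}\simeq Z_j$, such that consecutive sequences $Y_j\to Y_{j+1}$ are related by levelwise $I$-special squares; the desired $\omega$-presentation is the termwise colimit $Y_n:=\varinjlim_j Y_{j,n}$. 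Finite presentability enters exactly once, at each successor step $j\to j+1$: the attaching map $A_j\to Z_j\simeq\varinjlim_n Y_{j,n}$ factors through some $Y_{j,n_0}$, and $Y_{j+1}$ is then defined to agree with $Y_j$ up to level $n_0$ and by pushouts thereafter.

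Your argument instead discards the original ordinal indexing and reorganizes the cells globally by a rank function derived from finite supports --- essentially a skeletal filtration by ``attachment depth''. Both proofs invoke finite presentability for the same purpose (factoring an attaching map through a finitely generated sub-stage), but yours bookkeeps via the directed poset of finite down-closed cell-sets, while the paper bookkeeps via the special-square calculus acting on a tower of $\omega$-sequences. The paper's formulation has the advantage that the generalization to $\alpha$-presentable domains (stated immediately after the lemma) is immediate: one replaces $\omega$ by the first ordinal of cardinality $\alpha$ throughout. Your version generalizes as well, but one must check that the poset of down-closed cell-sets of size $<\alpha$ is $\alpha$-filtered, and that the rank function still takes values below that ordinal.
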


For a proof of this lemma, see appendix~\ref{proofclef}, proposition~\ref{propclef}.

\begin{paragr} \label{remlemmeclef}
Let $\Classarr$ be a class of arrows in $\CC$ and $\lambda$ an ordinal. We denote by $\cell[\lambda]{\Classarr}$ the class of arrows obtained by $\lambda$\nobreakdash-indexed transfinite composition of pushouts of (small) sums of arrows in $\Classarr$. With this notation, the lemma~\ref{lemmeclef} says that if $\Setarr$ is a (small) set of arrows in $\CC$, with finitely presentable domains, then $\cell{\Setarr}=\cell[\omega]{\Setarr}$, where $\omega$ is the smallest countable ordinal. More generally, it can be shown that if $\Setarr$ is a (small) set of arrows in $\CC$, with $\alpha$\nobreakdash-presentable domains, for some regular cardinal $\alpha$, then $\cell{\Setarr}=\cell[\lambda]{\Setarr}$, where $\lambda$ is the smallest ordinal of cardinality $\alpha$.
\end{paragr}

\begin{paragr} \textbf{Globular extensions under $\Thetazero$.}
Let $\mc{\Thetazero}{\Cat}$ be the category of small categories under $\Thetazero$, whose objects are pairs $(\C,\Thetazero\to\C)$, where $\C$ is a small category and $\Thetazero\to\C$ a functor, and whose morphisms are (strictly) commutative triangles
\[
\xymatrixcolsep{2.8pc}
\xymatrixrowsep{.7pc}
\xymatrix{
&&\C\ar[dd]
\\
&\Thetazero\ar[ru]\ar[rd]
\\
&&\C'
&\hskip -40pt.
}\hskip 40pt
\]
Denote by $\CatExtGl$ the full subcategory of $\mc{\Thetazero}{\Cat}$ whose objects are pairs $(\C,\Thetazero\to\C)$ such that $\C$, endowed with the composite functor
\[
\xymatrix{
\Glob\ar[r]
&\Thetazero\ar[r]
&\C\quad,
}
\]
is a globular extension and the functor $\Thetazero\to\C$ a morphism of globular extensions (\ie~preserves globular sums). It is easy to see that any morphism of $\CatExtGl$ preserves globular sums and therefore defines a morphism of the underlying globular extensions. In other terms, the category $\CatExtGl$ is the category of globular extensions \emph{under} the universal globular extension $\Thetazero$. It's nothing else then the category of globular extensions endowed with a fixed choice of objects representing the globular sums, and morphisms of globular extensions compatible with these choices.
\end{paragr}

\begin{paragr} \textbf{The left adjoint to the inclusion functor $\CatExtGl\to\mc{\Thetazero}{\Cat}$.}
Let $(\C,\Thetazero\to\C)$ be any object of $\mc{\Thetazero}{\Cat}$. By a standard categorical construction~\cite[section~2.6]{Ara}, there exists a globular extension $\GlobUniv{\C}$ with a morphism $\C\to\GlobUniv{\C}$ in $\mc{\Glob}{\Cat}$
\[
\xymatrix{
&\Glob\ar[r]\ar[rrd]
&\Thetazero\ar[r]
&\C\ar[d]
\\
&&&\GlobUniv{\C}
&\hskip -30pt,
}
\]
satisfying the following universal property: for every morphism $\C\to\C'$ of globular extensions under $\Thetazero$, there exists a unique morphism of globular extensions $\GlobUniv{\C}\to\C'$ such that the following triangle is commutative
\[
\xymatrix{
&\C\ar[d]\ar[rd]
\\
&\GlobUniv{\C}\ar[r]
&\C'
&\hskip -30pt.
}
\]
Moreover, the functor $\C\to\GlobUniv{\C}$ induces a bijection on the sets of objects. It is easy to see that the assignment
\[
(\C,\Thetazero\to\C)\longmapsto(\GlobUniv{\C},\Thetazero\to\C\to\GlobUniv{\C})
\]
defines a functor $\mc{\Thetazero}{\Cat}\to\CatExtGl$, which is a left adjoint of the full and faithful inclusion $\CatExtGl\to\mc{\Thetazero}{\Cat}$, the canonical map $\C\to\GlobUniv{\C}$ corresponding to the unit of the adjunction.
In particular, as the category $\mc{\Thetazero}{\Cat}$ is complete and cocomplete, the same holds for $\CatExtGl$. Furthermore, one can easily verify that the inclusion $\CatExtGl\to\mc{\Thetazero}{\Cat}$ preserves filtered colimits, and as a consequence the category $\CatExtGl$ is locally presentable.
\end{paragr}

\begin{paragr} \textbf{The generating cofibrations.}
Let $\Tree$ be an object of $\Thetazero$ and $i$ an integer, $i\geqslant0$. Denote by $\SourceI{\Tree}{i}$ the globular extension obtained from $\Thetazero$ by formally adding two arrows
\[
\xymatrixcolsep{3.5pc}
\xymatrix{
\Dn{i} \ar@<1ex>[r]^-{\mathUniv{f}=\mathUniv{f}_{(\Tree,i)}} \ar@<-1ex>[r]_-{\mathUniv{g}=\mathUniv{g}_{(\Tree,i)}}
&\Tree
}
\]
satisfying (in case $i>0$) the relations
\[
\mathUniv{f}\Ths{i}=\mathUniv{g}\Ths{i}\smsp,\quad \mathUniv{f}\Tht{i}=\mathUniv{g}\Tht{i}\smsp,
\]
\ie~by formally adding a pair of parallel arrows, and $\ButI{\Tree}{i}$ the globular extension obtained from $\SourceI{\Tree}{i}$  by formal adjunction of a lifting $\mathUniv{h}=\mathUniv{h}_{(\Tree,i)}$ for the pair $(\mathUniv{f}_{(\Tree,i)},\mathUniv{g}_{(\Tree,i)})$. Define
\[
\SetArr=\{\SourceI{\Tree}{i}\toto\ButI{\Tree}{i}\mid i\geqslant0\,,\ \Tree\in\Ob\,\Thetazero\}\smsp.
\]
Observe that as $\Thetazero$ is a small category, $\SetArr$ is a (small) set. On the other hand, we have the following immediate lemma:
\end{paragr}

\begin{lemme} 
The categories $\SourceI{\Tree}{i}$ \emph{(}as well as $\ButI{\Tree}{i}$\emph{),} considered as categories under $\Thetazero$, are finitely presentable objects of $\CatExtGl$.
\end{lemme}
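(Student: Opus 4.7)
The strategy is to reduce the claim to finite presentability in the ambient slice category $\mc{\Thetazero}{\Cat}$ and then appeal to the commutation of finite limits with filtered colimits in $\Ens$. The reduction is legitimate because the inclusion $\CatExtGl\hookrightarrow\mc{\Thetazero}{\Cat}$ is fully faithful and preserves filtered colimits (both facts are recorded in the excerpt): hence for any filtered diagram $(X_j)_{j\in J}$ in $\CatExtGl$ with colimit $\C$, the two sides of the finite presentability condition are unchanged when computed in the larger category.

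Next, I would make the representable functor $\Hom_{\mc{\Thetazero}{\Cat}}(\SourceI{\Tree}{i},-)$ explicit using the construction of $\SourceI{\Tree}{i}$. By definition, $\SourceI{\Tree}{i}$ is obtained from $\Thetazero$ by freely adjoining two arrows $\mathUniv{f},\mathUniv{g}:\Dn{i}\rightrightarrows\Tree$ subject only to the two parallelism relations (when $i>0$). A standard argument then shows that a morphism $\SourceI{\Tree}{i}\to\C$ in $\mc{\Thetazero}{\Cat}$ amounts to a pair of arrows $(f,g)$ in $\Hom_\C(\Dn{i},\Tree)^2$ satisfying $f\Ims{i}=g\Ims{i}$ and $f\Imt{i}=g\Imt{i}$. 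Similarly, a morphism $\ButI{\Tree}{i}\to\C$ amounts to such a pair $(f,g)$ together with an arrow $h\in\Hom_\C(\Dn{i+1},\Tree)$ satisfying $h\Ims{i+1}=f$ and $h\Imt{i+1}=g$. In both cases the hom-set is carved out of a finite product of $\C$-hom-sets by finitely many equations.

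Finally, I would invoke the fact that filtered colimits in $\mc{\Thetazero}{\Cat}$ reduce to filtered colimits in $\Cat$, and that in $\Cat$ the hom-set between two objects that are already present throughout a filtered diagram coincides with the filtered colimit of the hom-sets. Applied to $(\Dn{k},\Tree)$ for $k=i,i+1$, this yields
\[
\Hom_\C(\Dn{k},\Tree)\ \simeq\ \varinjlim_j\Hom_{X_j}(\Dn{k},\Tree).
\]
Combining this with the descriptions of the previous paragraph, and using that finite limits (here, equalizers of the parallelism and lifting equations in a finite product of hom-sets) commute with filtered colimits in $\Ens$, the canonical maps
\[
\varinjlim_j\Hom(\SourceI{\Tree}{i},X_j)\ \tosim\ \Hom(\SourceI{\Tree}{i},\C),\quad\varinjlim_j\Hom(\ButI{\Tree}{i},X_j)\ \tosim\ \Hom(\ButI{\Tree}{i},\C)
\]
are bijections, as required.

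I expect the only mildly delicate point to be verifying that the universal property of $\SourceI{\Tree}{i}$ (and of $\ButI{\Tree}{i}$) holds against an arbitrary object of $\mc{\Thetazero}{\Cat}$, not only against a genuine globular extension. This is however transparent from the ``generators and relations'' construction, which is a universal construction already inside the full slice category; once this is granted, the argument is purely the commutation of finite limits with filtered colimits.
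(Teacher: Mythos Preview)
Your argument is correct, and it spells out precisely what the paper leaves implicit: the lemma is introduced as ``immediate'' and no proof is supplied. Your route---reducing to $\mc{\Thetazero}{\Cat}$ via the full faithfulness of the inclusion and its preservation of filtered colimits, identifying the representable functor with a finite limit of hom-sets $\Hom_{(-)}(\Dn{k},\Tree)$, and then using that hom-sets in a filtered colimit of categories are the filtered colimits of hom-sets---is exactly the natural unpacking of ``immediate''.

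One remark: the ``mildly delicate point'' you flag in your last paragraph is in fact a non-issue for your argument as written. You only ever evaluate the universal property of $\SourceI{\Tree}{i}$ (and $\ButI{\Tree}{i}$) on objects $X_j$ and $\C=\varinjlim_j X_j$ that already lie in $\CatExtGl$; by full faithfulness, the hom-sets in $\mc{\Thetazero}{\Cat}$ and in $\CatExtGl$ coincide for these targets, so the defining universal property in $\CatExtGl$ suffices. You never need the universal property against an arbitrary object of $\mc{\Thetazero}{\Cat}$. The passage through $\mc{\Thetazero}{\Cat}$ is only used to know that the filtered colimit is computed in $\Cat$, a fact the paper records.
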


In particular, the set $\SetArr$ is admissible for the small object argument, and applying proposition~\ref{smallobjargprop}, we get:

\begin{prop} \label{factsystcohprop}
Every arrow $f$ of $\CatExtGl$ can be factored as $f=pi$, with $i\in\cell{\SetArr}$ and $p\in\rlp{\SetArr}$. In particular, 
\[
(\cof{\SetArr},\rlp{\SetArr})\qquad(\cof{\SetArr}=\llp{\rlp{\SetArr}})
\] 
is a weak factorization system. Moreover, $\cof{\SetArr}$ is the smallest class of arrows in $\CatExtGl$ containing $\Setarr$ stable under pushouts, transfinite composition and retracts, and it is equal to the class of retracts of arrows in $\cell{\Setarr}$.
\end{prop}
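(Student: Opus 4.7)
The plan is to obtain this as a direct application of Proposition~\ref{smallobjargprop} to the cocomplete category $\CatExtGl$ and the set $\SetArr$. All the non-trivial work has already been done: $\CatExtGl$ is cocomplete (indeed locally presentable) by the adjunction with $\mc{\Thetazero}{\Cat}$, and the preceding lemma records that the domains $\SourceI{\Tree}{i}$ (and codomains $\ButI{\Tree}{i}$) of the arrows in $\SetArr$ are finitely presentable objects of $\CatExtGl$.

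First, I would invoke the remark of~\ref{smallobjarg}: a (small) set of arrows whose domains are $\alpha$\nobreakdash-presentable for some regular cardinal $\alpha$ is automatically admissible for the small object argument. Since $\SetArr$ is small (because $\Thetazero$ is small, and the indexing is by a pair $(\Tree,i)\in\Ob\Thetazero\times\mathbb{N}$) and has finitely presentable ($=\aleph_0$-presentable) domains by the preceding lemma, admissibility follows at once.

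Then I would simply quote Proposition~\ref{smallobjargprop} with $\CC=\CatExtGl$ and $\Setarr=\SetArr$. This immediately yields the desired factorization $f=pi$ with $i\in\cell{\SetArr}$ and $p\in\rlp{\SetArr}$, the fact that $(\cof{\SetArr},\rlp{\SetArr})$ is a weak factorization system, the characterization of $\cof{\SetArr}$ as the smallest class containing $\SetArr$ stable under pushouts, transfinite composition and retracts, and its description as the class of retracts of arrows in $\cell{\SetArr}$.

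There is no real obstacle in this proposition itself; the content lies entirely in the preceding lemma (finite presentability of $\SourceI{\Tree}{i}$ and $\ButI{\Tree}{i}$ as objects under $\Thetazero$) and in the general small object argument~\ref{smallobjargprop}. The only point worth a line of attention is to confirm that finite presentability is meant in $\CatExtGl$ itself (not merely in $\mc{\Thetazero}{\Cat}$), which is what the lemma actually asserts and is precisely the hypothesis needed to invoke~\ref{smallobjarg}.
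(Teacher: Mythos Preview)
Your proposal is correct and matches the paper's approach exactly: the paper states this proposition as an immediate consequence of the preceding lemma (finite presentability of the domains, hence admissibility of $\SetArr$) together with Proposition~\ref{smallobjargprop}, with no further proof given.
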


We can now state the principal result of this section, roughly saying that \Gr\nobreakdash-coher\-ators are exactly the $\SetArr$\nobreakdash-fibrant $\SetArr$\nobreakdash-cellular objects of $\CatExtGl$. More precisely:

\begin{thm} \label{thmclef}
An object $(\C,\Thetazero\to\C)$ of $\CatExtGl$ is $\SetArr$\nobreakdash-fibrant and $\SetArr$\nobreakdash-cellular if and only if $\C$, endowed with the composite map $\Glob\to\Thetazero\to\C$, is a \Gr\nobreakdash-coherator. 
\end{thm}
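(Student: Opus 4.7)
The plan is to exhibit a tight dictionary between the data defining a \Gr-coherator and the two properties $\SetArr$-fibrancy and $\SetArr$-cellularity, so that each direction of the equivalence falls out from matching universal properties.

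First, I would unravel what $\SetArr$-fibrancy means. The terminal object of $\CatExtGl$ is the terminal category (which is trivially a globular extension), and $(\C,\Thetazero\to\C)$ is $\SetArr$-fibrant iff $\C\to\ast$ has the right lifting property with respect to every generator $\SourceI{\Tree}{i}\to\ButI{\Tree}{i}$. By construction of $\SourceI{\Tree}{i}$, a morphism $\SourceI{\Tree}{i}\to\C$ in $\CatExtGl$ is precisely the datum of a pair of parallel arrows $(f,g):\ImD{i}\to T$ in $\C$, where $T$ is the image of $\Tree$; an extension along $\SourceI{\Tree}{i}\to\ButI{\Tree}{i}$ is precisely a lifting of that pair. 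Hence $\SetArr$-fibrancy is literally condition $(a)$ of~\ref{cohgr}.

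Next, I would analyse $\SetArr$-cellularity. Since $\Thetazero$ is itself a globular extension, the initial object of $\CatExtGl$ is $(\Thetazero,\id{\Thetazero})$. Because the domains of maps in $\SetArr$ are finitely presentable, lemma~\ref{lemmeclef} yields that $(\C,\Thetazero\to\C)$ is $\SetArr$-cellular iff there is a sequence
\[
\Thetazero=\C_0\to\C_1\to\cdots\to\C_n\to\C_{n+1}\to\cdots\to\C\simeq\varinjlim\C_n
\]
in $\CatExtGl$ with each $\C_n\to\C_{n+1}$ a pushout of a small sum of arrows in $\SetArr$. The equality $\C_0=\Thetazero$ is precisely condition $(b_0)$. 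For the successor step, a map $\coprod_\alpha\SourceI{\Tree_\alpha}{i_\alpha}\to\C_n$ encodes, by the universal property of the $\SourceI{\Tree_\alpha}{i_\alpha}$, a family $(\mathUniv{f}_\alpha,\mathUniv{g}_\alpha)_\alpha$ of pairs of parallel arrows in $\C_n$; I would then verify that the pushout
\[
\C_{n+1}\,\simeq\,\C_n\amalg_{\coprod_\alpha\SourceI{\Tree_\alpha}{i_\alpha}}\coprod_\alpha\ButI{\Tree_\alpha}{i_\alpha}\smsp,
\]
computed in $\CatExtGl$, is canonically the universal globular extension obtained from $\C_n$ by formally adjoining a lifting for each $(\mathUniv{f}_\alpha,\mathUniv{g}_\alpha)$, \ie~the object characterised by condition $(b_n)$. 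This matching supplies both directions simultaneously: from a tower of definition one reads off a cellular filtration, and from a cellular filtration one reads off a tower of definition.

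The main obstacle is the identification in the previous paragraph. It requires computing the pushout in $\CatExtGl$ as the reflection, under the left adjoint $\overline{(-)}\colon\mc{\Thetazero}{\Cat}\to\CatExtGl$, of the corresponding pushout in $\mc{\Thetazero}{\Cat}$, and then checking that the resulting object satisfies the universal property spelled out in $(b_n)$: for every morphism $\C_n\to\C'$ of globular extensions and every choice of lifting in $\C'$ for the image of each $(\mathUniv{f}_\alpha,\mathUniv{g}_\alpha)$, there is a unique compatible morphism $\C_{n+1}\to\C'$ in $\CatExtGl$. Once this comparison is established, the theorem follows with no further work: the ``if'' direction produces the required $\SetArr$-cellular filtration from any tower of definition and reads $\SetArr$-fibrancy off condition $(a)$; the ``only if'' direction interprets a cellular presentation of $\Thetazero\to\C$ as a tower of definition and reads condition $(a)$ off $\SetArr$-fibrancy.
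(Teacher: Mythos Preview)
Your proposal is correct and follows essentially the same route as the paper: identify $\SetArr$-fibrancy with the lifting condition~$(a)$, identify pushouts of sums of the generating maps with the universal globular extensions of condition~$(b_n)$, and invoke lemma~\ref{lemmeclef} to pass from an arbitrary $\SetArr$-cellular presentation to an $\omega$-indexed tower. The only point to tighten is the claim that $\SetArr$-fibrancy is \emph{literally} condition~$(a)$: fibrancy only tests parallel pairs whose codomain lies in the image of $\Thetazero$, so the full strength of~$(a)$ follows only after observing that an $\SetArr$-cellular object has $\Thetazero\to\C$ bijective on objects (each generator being so); the paper uses this observation in the same way.
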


\begin{proof}
Let $(\C,\Thetazero\to\C)$ be an object of  $\CatExtGl$. Observe that, for $\Tree$ an object of $\Thetazero$ and $i$ an integer, $i\geqslant0$, by definition of $\SourceI{\Tree}{i}$, there is a bijection of the set $\Hom_{\CatExtGl}(\SourceI{\Tree}{i},\C)$ with the set of pairs of parallel arrows in $\C$, with domain $\Dn{i}$ and codomain $\Tree$. Furthermore, a map from $\SourceI{\Tree}{i}$ to $\C$ has an extension to $\ButI{\Tree}{i}$ if and only if the corresponding pair of parallel arrows has a lifting.
\smallbreak

On the other hand, it is easy to verify that if $(f_k,g_k)_{k\in K}$,  $f_k,g_k:\Dn{i_k}\to\Tree_k$, is a family of pairs of parallel arrows in $\C$, the universal globular extension $\C\to\C'$ obtained from $\C$ by formally adding a lifting for every pair in this family, can be constructed by the following pushout square in $\CatExtGl$
\[
\xymatrix{
&\mathop{\amalg}\limits_{k\in K}\SourceI{\Tree_k}{i_k} \ar@<.7ex>[r]\ar[d]
&\C\vrule depth 6pt width 0pt\ar[d]
\\
&\mathop{\amalg}\limits_{k\in K}\ButI{\Tree_k}{i_k}\ar@<.7ex>[r]
&\C'\vrule depth 6pt width 0pt
&\hskip -30pt,
}
\]
where the upper horizontal arrow is defined by the maps $\SourceI{\Tree_k}{i_k}\to\C$ corresponding to the pairs of parallel arrows $(f_k,g_k)$, $k\in K$.
\smallbreak

Suppose now that $\C$ is a \Gr\nobreakdash-coherator. Then the functor $\Thetazero\to\C$ induces a bijection of the sets of objects, and the previous considerations imply that the condition (\emph{a}) of the definition of a \Gr\nobreakdash-coherator means exactly that $(\C,\Thetazero\to\C)$ is a $\SetArr$\nobreakdash-fibrant object of $\CatExtGl$, and that the condition (\emph{b}) of the definition implies that $(\C,\Thetazero\to\C)$ is $\Setarr$\nobreakdash-cellular. The converse is an immediate consequence of lemma~\ref{lemmeclef}.
\end{proof}

\begin{paragr} \label{pseudocoh} 
\textbf{\Gr-Pseudo-coherators.}
A \Gr-\emph{pseudo-coherator} is a globular extension\break $(\C,\Glob\to\C)$ such that every pair of parallel arrows in $\C$ \emph{with codomain a globular sum} has a lifting in $\C$. As observed in the proof above, the condition (\emph{a}) in the definition of a \Gr\nobreakdash-coherator can be replaced by the condition of being a \Gr\nobreakdash-pseudo-coherator. If $(\C,\Glob\to\C)$ is a globular extension with $\C$ a small category, and $\Thetazero\to\C$ the unique (up to unique isomorphism) morphism of globular extensions defined by the universal property of $\Thetazero$, then $(\C,\Thetazero\to\C)$ is an object of $\CatExtGl$, and $(\C,\Glob\to\C)$ is a \Gr\nobreakdash-pseudo-coherator if an only if $(\C,\Thetazero\to\C)$ is a $\SetArr$\nobreakdash-fibrant object of $\CatExtGl$. 
\end{paragr}

\begin{prop} \label{proppseudo}
Let $\Coh$ be a \Gr\nobreakdash-coherator and $\C$ a \Gr\nobreakdash-pseudo-coherator. Then there exists a morphism of globular extensions from $\Coh$ to $\C$.
\end{prop}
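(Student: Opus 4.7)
The plan is to build the morphism $\Coh\to\C$ inductively along a tower of definition of $\Coh$, using the universal property of each $\Coh_{n+1}$ together with the pseudo-coherator hypothesis to supply the required liftings at each stage. Fix a tower $\Thetazero=\Coh_0\to\Coh_1\to\cdots\to\varinjlim\Coh_n\simeq\Coh$ as in~\ref{cohgr}.

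First, apply the universal property of the globular completion $\Thetazero$ to the globular extension $\Glob\to\C$: this produces a morphism of globular extensions $F_0\colon\Coh_0=\Thetazero\to\C$, unique up to unique natural isomorphism. Now suppose $F_n\colon\Coh_n\to\C$ has been constructed. By condition $(b_n)$, $\Coh_{n+1}$ is obtained from $\Coh_n$ by formally adjoining a lifting $\mathUniv{h}_i$ for each pair $(\mathUniv{f}_i,\mathUniv{g}_i)$ of parallel arrows $\Dn{k_i}\to X_i$ in a family $(\mathUniv{f}_i,\mathUniv{g}_i)_{i\in I_n}$. To extend $F_n$ to a morphism of globular extensions $F_{n+1}\colon\Coh_{n+1}\to\C$, it suffices, by the universal property of $\Coh_{n+1}$ recalled in~\ref{cohgr}, to exhibit for each $i\in I_n$ a lifting in $\C$ of the parallel pair $(F_n(\mathUniv{f}_i),F_n(\mathUniv{g}_i))$.

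To produce these liftings, the key observation is that the objects of $\Coh_n$ are indexed by tables of dimensions, and the canonical functor $\Thetazero\to\Coh_n$ (which preserves globular sums since it is a morphism of globular extensions) presents each such object as an iterated globular sum of disks inside $\Coh_n$. Consequently, the codomain $X_i$ is a globular sum in $\Coh_n$, and since $F_n$ is a morphism of globular extensions, $F_n(X_i)$ is a globular sum in $\C$. The pseudo-coherator hypothesis on $\C$ thus provides a lifting $h_i\colon F_n(\Dn{k_i+1})\to F_n(X_i)$ for the pair $(F_n(\mathUniv{f}_i),F_n(\mathUniv{g}_i))$, and the universal property of $\Coh_{n+1}$ yields the desired extension $F_{n+1}$, whose restriction to $\Coh_n$ coincides with~$F_n$.

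Finally, since the $F_n$'s are compatible with the transition morphisms $\Coh_n\to\Coh_{n+1}$ by construction, they define a cocone over the tower with vertex $\C$. As $\Coh\simeq\varinjlim\Coh_n$ as a globular extension, this cocone induces a morphism of globular extensions $F\colon\Coh\to\C$, as required. The main subtle point, rather than an obstacle, is the verification that the codomains $X_i$ are globular sums, so that the pseudo-coherator property, which a priori is weaker than the coherator property, actually suffices at each inductive step; everything else follows routinely from the universal properties of $\Thetazero$ and of the $\Coh_{n+1}$.
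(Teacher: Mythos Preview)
Your proof is correct. The inductive construction along the tower, using at each step that every object of $\Coh_n$ is a globular sum (since $\Thetazero\to\Coh_n$ is a bijection on objects and a morphism of globular extensions) so that the pseudo-coherator hypothesis applies to the image pairs, is sound; the passage to the colimit is unproblematic because filtered colimits of globular extensions under $\Thetazero$ are computed as in $\Cat$.

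The paper proceeds differently: it invokes the weak factorization system machinery developed in this section. After reducing to small $\C$, it observes that $\Coh$ is an $\SetArr$\nobreakdash-cellular object of $\CatExtGl$ (theorem~\ref{thmclef}) and that $\C$ is $\SetArr$\nobreakdash-fibrant (paragraph~\ref{pseudocoh}), whence the existence of the morphism is the lifting property for the square $\varnothing\to\Coh$, $\C\to\ast$. Your argument is exactly what one obtains by unpacking this abstract lifting property step by step along the cellular presentation of $\Coh$; conversely, the paper's proof packages your induction into a single appeal to $\cell{\SetArr}\subset\llp{\rlp{\SetArr}}$. The paper's route exhibits the proposition as a formal consequence of the reinterpretation of coherators via factorization systems, which is the point of the section; your route is more elementary and self-contained, requiring nothing from section~3 beyond the definitions.
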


\begin{proof}
Choosing a small full subcategory of $\C$ containing globular sums, we can suppose that $\C$ is small. By the universal property of $\Thetazero$, $\Coh$ and $\C$ define objects of $\CatExtGl$, denoted also by $\Coh$ and $\C$, the first being, in particular, a $\Setarr$\nobreakdash-cellular object (\cf~\ref{thmclef}) and the second being $\Setarr$\nobreakdash-fibrant (\cf~\ref{pseudocoh}). Therefore the proposition is a consequence of the lifting properties between $\Setarr$\nobreakdash-cellular maps and $\Setarr$\nobreakdash-fibrations.
\[
\xymatrix{
\varnothing\ar[d]\ar[r]
&\C\ar[d]
\\
\Coh\ar[r]\ar@{-->}[ru]
&\ast
}
\]
\vskip -17pt
\end{proof}

\begin{paragr} \textbf{The \Gr-pseudo-coherator $\Top$.}
The considerations of paragraph~\ref{fondinfgrpd} show that the category $\Top$ of topological spaces, endowed with the canonical functor $\Glob\to\Top$, defined by topological disks (\cf~\ref{topglob}), is a \Gr\nobreakdash-pseudo-coherator. For $\Coh$ a \Gr\nobreakdash-coherator, proposition~\ref{proppseudo} gives a formal interpretation of the construction of the topological $\infty$\nobreakdash-$\Coh$\nobreakdash-cogroupoid $\Coh\to\Top$, explained in paragraph~\ref{fondinfgrpd}.
\end{paragr}

\section{A definition of lax $\infty$-categories}

\begin{paragr}
\textbf{Globular theories.}
A \emph{globular theory} is a globular extension $(\C,\Glob\to\C)$ such that the morphism of globular extensions $\Thetazero\to\C$, defined by the universal property of $\Thetazero$, is faithful and induces a bijection of the sets of isomorphism classes of objects. Replacing $\C$ by an equivalent category, we can suppose, without loss of generality, that this functor induces a bijection of the sets of objects, as we will always assume in the sequel. We will identify $\Thetazero$ to a (non-full) subcategory of $\C$, and we will say that an arrow $f$ of $\C$ is \emph{globular} if $f$ is in $\Thetazero$. The arrow $f$ will be called \emph{algebraic} if for every decomposition $f=gf'$, with $g$ globular, $g$ is an identity. A morphism of globular theories is a morphism of the underlying globular extensions.
\end{paragr}

\begin{paragr}
\textbf{Generalized cosource and cotarget maps.} Let 
\[
\Tree=\Dn{i_1}\amalg^{}_{\Dn{i'_1}}\Dn{i_2}\amalg^{}_{\Dn{i'_2}}\cdots\,\,\amalg^{}_{\Dn{i'_{m-1}}}\!\!\Dn{i_m}\smsp
\]
an object of $\Thetazero$. The \emph{dimension} of $\Tree$ is the integer $i=\max\{i_k\,|\,1\leqslant k\leqslant m\}$. If $i>0$, we define an object $\Thbord\Tree$ of $\Thetazero$, of dimension $i-1$,
\[
\Thbord\Tree:\,=\Dn{\Thbordind{i_1}}\amalg^{}_{\Dn{i'_1}}\Dn{\Thbordind{i_2}}\amalg^{}_{\Dn{i'_2}}\cdots\,\,\amalg^{}_{\Dn{i'_{m-1}}}\!\!\Dn{\Thbordind{i_m\kern -2pt}}\smsp\,,
\]
where, for $1\leqslant k\leqslant m$,
\[
\Thbordind{i_k}=\left\{
\begin{aligned}
&i_k\smsp,\quad\hskip 12pt i_k<i\smsp,\cr
\noalign{\vskip 3pt}
&i-1\smsp,\quad i_k=i\smsp,
\end{aligned}
\right.
\]
and arrows $\Ths{\Tree},\Tht{\Tree}:\Thbord\Tree\to\Tree$ of $\Thetazero$
\[
\Ths{\Tree}:\,=\Thbords{1}\amalg^{}_{\Dn{i'_1}}\!\Thbords{2}\amalg^{}_{\Dn{i'_2}}\cdots\,\,\amalg^{}_{\Dn{i'_{m-1}}}\!\!\!\Thbords{m}\smsp\,,\quad
\Tht{\Tree}:\,=\Thbordt{1}\amalg^{}_{\Dn{i'_1}}\!\Thbordt{2}\amalg^{}_{\Dn{i'_2}}\cdots\,\,\amalg^{}_{\Dn{i'_{m-1}}}\!\!\!\Thbordt{m}\smsp\,,
\]
where, for $1\leqslant k\leqslant m$,
\[
\Thbords{k}=\left\{
\begin{aligned}
&\id{\Dn{i_k}}\smsp,\quad i_k<i\smsp,\cr
\noalign{\vskip 3pt}
&\Ths{i_k}\smsp,\quad\hskip 5pt i_k=i\smsp,
\end{aligned}
\right.
\qquad
\Thbordt{k}=\left\{
\begin{aligned}
&\id{\Dn{i_k}}\smsp,\quad i_k<i\smsp,\cr
\noalign{\vskip 3pt}
&\Tht{i_k}\smsp,\quad\hskip 5pt i_k=i\smsp.
\end{aligned}
\right.
\]
\end{paragr}

\begin{paragr}
\textbf{Admissible pairs of arrows.} Let $\C$ be a globular theory. A \emph{pair of parallel arrows admissible for a theory of} $\infty$\nobreakdash-\emph{categories}, or more simply an \emph{admissible pair} of arrows of $\C$, is a pair $(f,g)$ of parallel arrows $f,g:\Dn{i}\to\Tree$ (\cf~\ref{cohgr}) such that either the arrows $f$ and $g$ are algebraic, or there exist decompositions
\[
f=\Ths{\Tree}f'\hhbox{and}g=\Tht{\Tree}g'
\]
\[
\vrule height 18pt depth 50pt width 0pt
\xymatrix{
\Dn{i}\ar[rr]^f\ar[rd]_{f'}
&&\Tree
&&\Dn{i}\ar[rr]^g\ar[rd]_{g'}
&&\Tree
\\
&\Thbord\Tree \ar[ru]_{\Ths{\Tree}}
&&&&\Thbord\Tree\ar[ru]_{\Tht{\Tree}}
}
\]
with $f'$ and $g'$ algebraic. The property for a parallel pair of arrows $(f,g)$ to be admissible is not symmetric: if $(f,g)$ is admissible, $(g,f)$ \emph{need not} be admissible.
\end{paragr}

\begin{paragr}
\textbf{\boldmath Coherators for a theory of $\infty$-categories.} \label{cohcat}
A \emph{coherator for a theory of $\infty$\nobreakdash-categories}, or more simply a \emph{\Ctg-coherator}, is a globular theory $\Glob\to\CohCat$ satisfying the following two conditions:
\smallbreak

\emph{a}) Every admissible pair of arrows in $\CohCat$ has a lifting in $\CohCat$.
\smallskip

\emph{b}) There exists a ``tower'' of globular theories (called \emph{tower of definition} of the \Ctg\nobreakdash-coherator $\CohCat$) with colimit $\CohCat$
\[
\xymatrixcolsep{1.5pc}
\xymatrix{
\Glob \ar[r]
&\CohCat_0\ar[r]
&\CohCat_1\ar[r]
&\cdots\ar[r]
&\CohCat_n\ar[r]
&\CohCat_{n+1}\ar[r]
&\cdots\ar[r]
&\CohCat\simeq\varinjlim\CohCat_n
}\smsp,
\]
where for every $n\geqslant0$, $\CohCat_n\to\CohCat_{n+1}$ is a morphism of globular theories, satisfying the following properties:
\begin{itemize}
\item[$b_0$\hskip 1pt)] $\Glob\to\CohCat_0$ is a globular completion;
\item[$b_n$)] for every $n\geqslant0$, there exists a family of admissible pairs of arrows in $\CohCat_n$ such that $\CohCat_{n+1}$ is the universal globular extension obtained from $\CohCat_n$ by formally adding a lifting for every pair in this family.
\end{itemize}
Condition $(b_0)$ implies that the canonical functor from $\Thetazero$ to $\CohCat_0$ is an isomorphism. We will usually suppose that $\CohCat_0$ is \emph{equal} to $\Thetazero$. Condition $(b_n)$ means, more precisely, that there exists a family $(\mathUniv{f}_i,\mathUniv{g}_i)_{i\in I_n}$  of admissible pairs of arrows in $\CohCat_n$, and for every $i\in I_n$, a lifting $\mathUniv{h}_i$ in $\CohCat_{n+1}$ of the image of the pair $(\mathUniv{f}_i,\mathUniv{g}_i)$ in $\CohCat_{n+1}$, satisfying the following universal property. For every globular extension $\Glob\to\C$ and every morphism of globular extensions $\CohCat_n\to\C$, if for every $i\in I_n$ a lifting $h_i$ of the image of the pair $(\mathUniv{f}_i,\mathUniv{g}_i)$ in $\C$ is given, then there exists a unique morphism of globular extensions $F:\CohCat_{n+1}\to\C$ such that for every $i$ in $I_n$, $F(\mathUniv{h}_i)=h_i$ and such that the triangle
\[
\xymatrixrowsep{1.7pc}
\xymatrix{
\CohCat_n\ar[r]\ar[rd]
&\CohCat_{n+1}\ar[d]^{F}
\\
&\C
}
\]
is commutative. 
\end{paragr}

\begin{paragr} \textbf{\boldmath Lax $\infty$-categories.} \label{definfcat}
Let $\Glob\to\CohCat$ be a \Ctg\nobreakdash-coherator. A \emph{$\infty$\nobreakdash-category of type $\CohCat$}, or more simply a \emph{$\infty$\nobreakdash-$\CohCat$\nobreakdash-category}, is a presheaf $\Catg:\CohCat^\op\to\Ens$ on $\CohCat$ such that the functor $\Catg^\op:\CohCat\to\Ens^\op$ preserves globular sums. In other terms, for any globular sum in $\CohCat$, the canonical map
\[
\Catg(\Dn{i_1}\amalg^{}_{\Dn{i'_1}}\cdots\,\,\amalg^{}_{\Dn{i'_{m-1}}}\Dn{i_m})\toto
\Catg(\Dn{i_1})\times^{}_{\Catg(\Dn{i'_1})}\cdots\,\,\times^{}_{\Catg(\Dn{i'_{m-1}})}\Catg(\Dn{i_m})
\smsp
\]
is a bijection, the right hand side being the globular product, limit of the diagram
\[
\xymatrixrowsep{.2pc}
\xymatrixcolsep{1pc}
\xymatrix{
\Catg(\Dn{i_1})\ar[rdd]_(.37){\Catg(\Ths[i'_1]{i_1})}
&&\Catg(\Dn{i_2})\ar[ldd]^(.37){\!\Catg(\Tht[i'_1]{i_2})}\ar[rdd]^(.65){\!\!\!\!\Catg(\Ths[i'_2]{i_2})}
&&&&\Catg(\Dn{i_m})\ar[ldd]^(.37){\!\Catg(\Tht[i'_{m-1}]{i_m})}
\\
&&&&\cdots
\\
&\Catg(\Dn{i'_1})
&&\Catg(\Dn{i'_2})
&&\Catg(\Dn{i'_{m-1}})
&\hskip 38pt.
}
\] 
The \emph{category of $\infty$\nobreakdash-$\CohCat$\nobreakdash-categories} is the full subcategory of $\pref{\CohCat}$, category of presheaves on $\CohCat$, whose objects are $\infty$\nobreakdash-$\CohCat$\nobreakdash-categories.
\end{paragr}

\begin{paragr} \textbf{Examples of \Ctg-coherators.} \label{excohcat}
As in the case of \Gr\nobreakdash-coherators, there is a general method for constructing inductively \Ctg\nobreakdash-coherators. Take $\CohCat_0=\Thetazero$. Suppose that $\CohCat_n$ is defined and choose a set $\Adm_n$ of admissible pairs of arrows in $\CohCat_n$. Define $\CohCat_{n+1}$ as the universal globular extension obtained by formally adding a lifting for each pair in $\Adm_n$, and $\CohCat$ as the colimit $\CohCat=\varinjlim\CohCat_n$. In order for $\CohCat$ to be a \Ctg\nobreakdash-coherator, one has to verify first that the globular extensions $\CohCat_n$, $n\geqslant0$, are globular theories, and secondly that $\CohCat$ satisfies the condition (\emph{a}) in~\ref{cohcat}. The first of these conditions is conjectured to be always true, and is proved in~\cite[section~5.4]{Ara} with a mild hypothesis always verified in the examples: it is enough that $\Adm_0$ contains all the pairs $(\id{\Dn{i}},\id{\Dn{i}})$, $i\geqslant0$.
For the second condition to be verified, it is sufficient  (but not necessary) that every admissible pair of arrows in $\CohCat$ is the image of a pair in $\Adm_n$, for some $n\geqslant0$. Three important examples can be constructed (among many others) by this method.
\smallbreak

1) \textbf{The canonical \Ctg-coherator.} This example is obtained by taking $\Adm_n$ to be the set of \emph{all} admissible pairs of arrows in $\CohCat_n$. 
\smallbreak

2) \textbf{The Batanin-Leinster \Ctg-coherator.} It is obtained by defining $\Adm_n$ to be the set of admissible pairs of arrows in $\CohCat_n$ that are not the image of a pair in $\Adm_{n'}$, for some $n'<n$~\cite[4.1.4]{Ara}.

3) \textbf{The canonical reduced \Ctg-coherator.} It is constructed by taking $\Adm_n$ to be the set of admissible pairs of arrows in $\CohCat_n$ that do not have already a lifting in~$\CohCat_n$.
\smallbreak

It is easily seen that examples 1 and 2 satisfy the sufficient condition stated above. The example 3 does \emph{not} satisfy this condition; nevertheless, it is clear that it defines a \Ctg\nobreakdash-coherator. It is possible to put even more restrictive conditions on the sets $\Adm_n$ and still obtain a \Ctg\nobreakdash-coherator. It seems that it is not possible to find a minimal way for choosing the sets $\Adm_n$.
\end{paragr}

\begin{paragr}
\textbf{Structural maps.}
Let $\CohCat$ be a \Ctg-coherator. Those of the structural maps for $\infty$\nobreakdash-groupoids defined in paragraph~\ref{structmor} that do not concern inverses (\ie~\ref{bincompl1}\nobreakdash-\ref{trianglecons}) exist equally well in $\infty$\nobreakdash-$\CohCat$\nobreakdash-categories. To see this, one has to verify that the pairs of parallel arrows giving rise to these structural maps are admissible. This is proved in~\cite[4.2.7]{Ara}, under the mild hypothesis considered above. It is conjectured that it is always true.
\end{paragr}

\appendix
\section{Proof of lemma~\ref{lemmeclef}\label{proofclef}}

\begin{paragr}
Let $\CC$ be a cocomplete category and $\Classarr$ a class of arrows of $\CC$. Denote $\imds{\Classarr}$ the class of pushouts of (small) sums of arrows in $\Classarr$. It is easy to verify that $\imds{\Classarr}$ is stable under pushouts and sums, and that it contains isomorphisms. It is the smallest class containing $\Classarr$ and stable under pushouts and sums.
There is an inclusion 
\[
\imds{\Classarr}\subset\cell{\Classarr}\smsp.
\]
\end{paragr}

\begin{paragr}
A commutative square 
\[
\Square=\hskip 5pt
\raise 20pt
\vbox{
\xymatrix{
X\ar[r]^{f}\ar[d]_{x}
&Y\ar[d]^{y}
\\
X'\ar[r]_{f'}
&Y'
}
}\hskip 25pt
\]
in $\CC$ is called $\Classarr$\nobreakdash-special if there exists a commutative square
\[
\xymatrix{
&A\ar[r]^{i}\ar[d]_{a}
&B\ar[d]^{b}
\\
&X'\ar[r]_{f'}
&Y'
&\hskip -30pt,
}
\]
with $i$ in $\imds{\Classarr}$, such that the square
\[
\xymatrix{
A\amalg X\ar[r]^{i\amalg f}\ar[d]_{(a,x)}
&B\amalg Y\ar[d]^{(b,y)}
\\
X'\ar[r]_{f'}
&Y'
}
\]
is cocartesian (a pushout square). The property of being a $\Classarr$\nobreakdash-special square is a property of a square in an \emph{oriented} plane: if $\Square$ is $\Classarr$\nobreakdash-special the square
\[
\xymatrix{
X\ar[r]^{x}\ar[d]_{f}
&X'\ar[d]^{f'}
\\
Y\ar[r]_{y}
&Y'
}
\]
need not be $\Classarr$\nobreakdash-special.
\end{paragr}

\begin{lemme}
Let $\CC$ be a category, consider a commutative diagram in $\CC$
\[
\xymatrix{
&A\ar[r]^{i}\ar[d]_{u} \ar@<2.3ex>@{}[d]^(.45){\textstyle(1)}
&B\ar[d]^{v}
\\
A'\ar[r]^{u'}\ar[d]_{i'}\ar@<2.2ex>@{}[d]^(.49){\textstyle(1')}
&X\ar[r]^{g}\ar[d]_{g'\kern -1.5pt}\ar@<2.3ex>@{}[d]^(.49){\textstyle(2)}
&Y\ar[d]^{h}
\\
B'\ar[r]_{v'}
&Y'\ar[r]_{h'}
&Z
}
\]
and the induced commutative square
\[
\xymatrixcolsep{2.3pc}
\xymatrix{
&A\amalg A'\ar[r]^{i\,\amalg\,i'}\ar[d]_{(u,u')}\ar@<5.ex>@{}[d]^(.51){\textstyle(3)}
&B\amalg B'\ar[d]^{(hv,h'v')}
\\
&X\ar[r]_{hg{\scriptscriptstyle\,=\,}h'g'}
&Z
&\hskip -30pt.
}
\]
If $(1)$ and $(1')$ are cocartesian (pushout) squares, then the square~$(3)$ is cocartesian if and only if the square~$(2)$ is cocartesian.
\end{lemme}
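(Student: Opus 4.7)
The plan is a direct Yoneda-style verification of the universal property in both directions; the squares $(1)$ and $(1')$ serve as the only structural input, through their universal property as pushouts. No auxiliary construction is needed---the whole argument is a diagram chase.

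For the implication ``$(2)$ cocartesian $\Rightarrow$ $(3)$ cocartesian'', I would start from an arbitrary test cocone on $(3)$: an object $W$ together with maps $\alpha\colon X\to W$, $\beta_B\colon B\to W$, $\beta_{B'}\colon B'\to W$ satisfying $\alpha u=\beta_B i$ and $\alpha u'=\beta_{B'}i'$. Applied to the compatible pair $(\alpha,\beta_B)$, the pushout~$(1)$ yields a unique $\eta\colon Y\to W$ with $\eta g=\alpha$ and $\eta v=\beta_B$; similarly $(1')$ yields a unique $\zeta\colon Y'\to W$ with $\zeta g'=\alpha$ and $\zeta v'=\beta_{B'}$. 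Since $\eta g=\alpha=\zeta g'$, the pushout~$(2)$ produces a unique $\phi\colon Z\to W$ with $\phi h=\eta$ and $\phi h'=\zeta$, and $\phi$ automatically satisfies $\phi\cdot hg=\alpha$, $\phi\cdot hv=\beta_B$, $\phi\cdot h'v'=\beta_{B'}$, as required. Uniqueness of $\phi$ follows by chaining the uniqueness statements of $(1)$, $(1')$ and~$(2)$.

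For the converse direction, I would take a cocone $(\eta\colon Y\to W,\zeta\colon Y'\to W)$ with $\eta g=\zeta g'$ and set $\alpha=\eta g=\zeta g'$, $\beta_B=\eta v$, $\beta_{B'}=\zeta v'$. The compatibilities needed to invoke $(3)$ follow immediately from $gu=vi$ (commutativity of~$(1)$) and $g'u'=v'i'$ (commutativity of~$(1')$), so $(3)$ supplies a unique $\phi\colon Z\to W$. To recover $\phi h=\eta$ I would invoke the uniqueness part of pushout~$(1)$: both $\phi h$ and $\eta$ restrict to $\beta_B$ along $v$ and to $\alpha$ along $g$, hence they coincide. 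Symmetrically $\phi h'=\zeta$ via~$(1')$, and uniqueness of $\phi$ is then immediate.

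There is no real obstacle; the content is just the observation that $(1)$ and $(1')$ play a dual role, constructing the maps $\eta,\zeta$ in one direction and enforcing their uniqueness in the other. The one point at which I would be careful is simply reading the diagram correctly, in particular noticing that $u$ and $u'$ have common codomain $X$, so that $(u,u')$ denotes the pairing $A\amalg A'\to X$.
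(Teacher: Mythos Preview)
Your proof is correct. Both implications are verified cleanly, and you handle the one subtle point---using the uniqueness clause of the pushouts $(1)$ and $(1')$ to recover $\phi h=\eta$ and $\phi h'=\zeta$ in the converse direction---exactly as needed.

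The paper takes a different route. Rather than verifying the universal property of $(3)$ directly, it builds an auxiliary diagram factoring the map $i\amalg i'\colon A\amalg A'\to B\amalg B'$ through $B\amalg A'$, producing squares $(\mathrm{I}),(\mathrm{I}'),(\mathrm{II}),(\mathrm{II}')$ with $(\mathrm{I}),(\mathrm{I}')$ trivially cocartesian (coproduct inclusions), and then applies the pushout pasting lemma repeatedly: $(\mathrm{II})\circ(\mathrm{I})=(1)$ forces $(\mathrm{II})$ cocartesian, hence $(3)=(\mathrm{II}')\circ(\mathrm{II})$ is cocartesian iff $(\mathrm{II}')$ is, iff $(\mathrm{II}')\circ(\mathrm{I}')=(2)\circ(1')$ is, iff $(2)$ is. Your argument is more self-contained and avoids the auxiliary construction; the paper's argument is shorter once one has the pasting lemma on hand and makes the symmetry between the two directions manifest (the chain of ``iff''s runs both ways with no extra work). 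Either is perfectly acceptable here.
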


\begin{proof}
Consider the following commutative diagram
\[
\xymatrixcolsep{1.pc}
\xymatrixrowsep{2.6pc}
\xymatrix{
&A \ar[rrrr]^{i}\ar[d]^(.42){\textstyle\hskip 43pt (\mathrm{I})}
&&&&B\ar[rd]
&&A'\ar[rrrr]^{i'}\ar[ld]
&&&&B'\ar[d]^(.42){\textstyle\hskip -65pt (\mathrm{I}')}
\\
&A\amalg A'\ar[d]_{(u,u')}^(.5){\textstyle\hskip 45pt (\mathrm{II})}\ar[rrrrr]^{i\,\amalg\,\id{\kern -1ptA'}}
&&&&&B\amalg A'\ar[d]^{(v,gu')}\ar[rrrrr]^{\id{B}\kern 1pt\amalg\,i'}
&&&&&B\amalg B'\ar[d]^{(hv,h'v')}\ar[d]^(.5){\textstyle\hskip -65pt (\mathrm{II}')}
\\
&X\ar[rrrrr]_{g}
&&&&&Y\ar[rrrrr]_{h}
&&&&&Z
&,
}
\]
where the squares $(\mathrm{I})$ and $(\mathrm{I}')$ are cocartesian. Suppose that the squares $(1)$ and $(1')$ are cocartesian. As the squares $(\mathrm{I})$ and $(\mathrm{II})\circ(\mathrm{I})=(1)$ are cocartesian, so is $(\mathrm{II})$. Therefore the square $(3)=(\mathrm{II}')\circ(\mathrm{II})$ is cocartesian if and only if $(\mathrm{II}')$ is cocartesian. On the other hand as $(\mathrm{I}')$ is cocartesian, $(\mathrm{II}')$ is cocartesian if and only if $(\mathrm{II}')\circ(\mathrm{I}')=(2)\circ(1')$ is cocartesian. Finally, as $(1')$ is cocartesian, $(2)\circ(1')$ is cocartesian if and only if $(2)$ is cocartesian, which proves the lemma.
\end{proof}

\begin{lemme} \label{carspecial}
Let $\CC$ be a cocomplete category and $\Classarr$ a class of arrows of $\CC$. A commutative square in $\CC$
\[
\Square=\hskip 5pt
\raise 20pt
\vbox{
\xymatrix{
X\ar[r]^{f}\ar[d]_{x}
&Y\ar[d]^{y}
\\
X'\ar[r]_{f'}
&Y'
}
}\hskip 25pt
\]
is $\Classarr$-special if and only if there exists a commutative diagram
\[
\xymatrix{
&A\ar[r]^{i}\ar[d]_{a}
&B\ar[d]^{b}
\\
&X'\ar[r]_{f'}
&Y'
&\hskip -30pt,
}
\]
with $i$ in $\imds{\Classarr}$, such that the induced square
\[
\xymatrix{
A\ar[r]^{i}\ar[d]
&B\ar[d]^{b}
\\
X'\amalg_XY\ar[r]
&Y'
}
\]
\emph{(}where $A\to X'\amalg_XY$ is the composite map $A\To{a}X'\To{\mathrm{can}}X'\amalg_XY$ and $X'\amalg_XY\to Y'$ is the canonical map defined by the commutative square $\Square$\emph{)} is cocartesian.
\end{lemme}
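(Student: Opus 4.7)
\smallbreak
\noindent\textit{Proof proposal.} For a fixed commutative diagram as in the lemma, with $i \in \imds{\Classarr}$, both characterizations express the same universal property of $Y'$: namely, that it is the pushout of a certain diagram built from $(a, b, i)$ and the ambient square $\Square$. The plan is to identify canonically the two candidate pushouts and verify that the induced maps to $Y'$ agree; the equivalence of the two cocartesianity conditions then follows at once.

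Denote by $P_1 := X' \amalg_{A \amalg X} (B \amalg Y)$ the pushout appearing in the definition of $\Classarr$-special, and by $P_2 := (X' \amalg_X Y) \amalg_A B$ the pushout appearing in the lemma, where $A \to X' \amalg_X Y$ is the composite of $a : A \to X'$ with the canonical inclusion. Since $(a, x): A \amalg X \to X'$ and $i \amalg f: A \amalg X \to B \amalg Y$ are coproducts of maps, the pushout $P_1$ can be computed in two stages: first take the pushout along the $X$-component to obtain $X' \amalg_X Y$, then push out $i: A \to B$ along the composite $A \to X' \to X' \amalg_X Y$ to obtain $P_2$. This ``associativity of pushouts along coproduct decompositions'' yields a canonical isomorphism $P_1 \cong P_2$. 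To make the argument rigorous, I would invoke the preceding stacking lemma applied to the diagram obtained by stacking the defining pushout square of $X' \amalg_X Y$ on top of the pushout square defining $P_2$; alternatively, one can compute both $P_1$ and $P_2$ as the coequalizer of the common parallel pair $A \amalg X \rightrightarrows X' \amalg B \amalg Y$ (one map factoring through $X'$, the other through $B \amalg Y$) and match the universal properties by hand.

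Once the isomorphism $P_1 \cong P_2$ is in place, one checks that the canonical maps $P_1 \to Y'$ and $P_2 \to Y'$ --- both induced by $f': X' \to Y'$, $b: B \to Y'$, and $y: Y \to Y'$ via the commutativity of $\Square$ --- correspond under this isomorphism, so one is an isomorphism iff the other is. The main technical point is the bookkeeping in the identification $P_1 \cong P_2$, which, while conceptually straightforward, is sensitive to the orientation of squares (as emphasized just before the lemma); I expect that routing through the preceding stacking lemma will be the cleanest way to avoid a tedious coequalizer chase.
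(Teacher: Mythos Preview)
Your proposal is correct and matches the paper's approach: the paper applies the preceding lemma to the L-shaped diagram with $X,Y$ on top, $A,X',X'\amalg_XY$ in the middle row, and $B,B\amalg_AX',Y'$ on the bottom, which is exactly the ``routing through the preceding stacking lemma'' you suggest as the cleanest option. Your phrasing ``stacking the defining pushout square of $X'\amalg_XY$ on top of the pushout square defining $P_2$'' is a little imprecise for the actual shape of that lemma (the two pushout squares sit in the corners of an L, not vertically), and one still needs a standard pushout-pasting step to pass from the square involving $B\amalg_AX'$ to the one involving $A,B$ directly; but this is routine and your colimit/coequalizer alternative handles it in one stroke.
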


\begin{proof}
This lemma is an immediate consequence of the previous one, applied to the following solid commutative diagram
\[
\xymatrix{
&&X\ar[r]^{f}\ar[d]_{x}
&Y\ar[d]\ar@/^2pc/@{..>}[dd]^{y}
\\
&A\ar[r]^{a}\ar[d]_{i} 
&X'\ar[r]\ar[d]\ar@{..>}[rd]^{f'}
&X'\amalg_XY\ar[d]
\\
&B\ar[r]\ar@/_1.5pc/@{..>}[rr]_{b}
&B\amalg_AX'\ar[r]
&Y'
&\hskip -30pt,
}
\]
defined in the obvious way.
\end{proof}

\begin{lemme} \label{proprspecial}
Let $\CC$ be a cocomplete category and $\Classarr$ a class of arrows of $\CC$. 
\smallbreak

\emph{a)} Every cocartesian square in $\CC$ is $\Classarr$-special.
\smallbreak

\emph{b)} Let
\[
\xymatrix{
X\ar[r]^{f}\ar[d]_{x}
&Y\ar[d]^{y}
\\
X'\ar[r]_{f'}
&Y'
}
\]
be a $\Classarr$-special square. If $f$ is in $\imds{\Classarr}$, then $f'$ is in $\imds{\Classarr}$, too. 
\smallbreak

\emph{c)} The class of $\Classarr$-special squares is stable under ``vertical'' transfinite composition. More precisely, let $\wellord$ be a well-ordered set, with minimal element $\minwellord$, and
\[
X:\wellord\toto\Ar{\CC}\smsp,\quad\wo\longmapsto X_j=\ \,X_{\wo,0}\to X_{\wo,1}
\]
a functor from $\wellord$ to the category of arrows of $\CC$, satisfying the following conditions:
\begin{itemize}
\item[{\rm i)}] if $\Succ{\wo}$ is the successor of $\wo$ in $\wellord$, then the square
\[
\xymatrix{
X_{\wo,0}\ar[r]\ar[d]
&X_{\wo,1}\ar[d]
\\
X_{\Succ{\wo},0}\ar[r]
&X_{\Succ{\wo},1}
}
\]
is $\Classarr$-special;
\item[{\rm ii)}] if $\bgwo\neq\minwellord$ is not the successor of an element of $\wellord$, then $X_\bgwo\simeq\varinjlim_{\smwo<\bgwo}X_\smwo$.
\end{itemize}
Then the square
\[
\xymatrix{
X_{\minwellord,0}\ar[r]\ar[d]
&X_{\minwellord,1}\ar[d]
\\
\varinjlim_{\wo\in\wellord}X_{\wo,0}\ar[r]
&\varinjlim_{\wo\in\wellord}X_{\wo,1}
}
\]
is $\Classarr$-special.
\end{lemme}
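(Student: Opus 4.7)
The plan is to treat the three parts separately. Parts~(a) and~(b) are short formal checks; the substance lies in~(c). For~(a), since $\imds{\Classarr}$ contains isomorphisms, take $A = B = \varnothing$ (an initial object) and $i = \id{\varnothing}$; the canonical identifications $A \amalg X \simeq X$ and $B \amalg Y \simeq Y$ reduce the defining square of $\Classarr$-specialness to $\Square$ itself, cocartesian by hypothesis. For~(b), the hypothesis furnishes $i \in \imds{\Classarr}$ such that the auxiliary square with top row $i \amalg f$ is cocartesian; since $\imds{\Classarr}$ is stable under small sums and $f \in \imds{\Classarr}$, we have $i \amalg f \in \imds{\Classarr}$, whence $f'$ (a pushout of $i \amalg f$) lies in $\imds{\Classarr}$ by stability under pushouts.

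For~(c), I first recast specialness via lemma~\ref{carspecial}: a square with vertices $X, Y, X', Y'$ is $\Classarr$-special iff there exist $i\colon A \to B$ in $\imds{\Classarr}$, $b\colon B \to Y'$, and $A \to X'$ such that the induced square
\[
\xymatrix{
A\ar[r]^i\ar[d] & B\ar[d]^b \\
X' \amalg_X Y\ar[r] & Y'
}
\]
is cocartesian. For each $\wo \in \wellord$, let $W_\wo$ denote the set of successor elements of $\wellord$ that are at most $\wo$, and set
\[
A_{(\wo)} := \coprod_{\smwo \in W_\wo} A_\smwo\,,\qquad B_{(\wo)} := \coprod_{\smwo \in W_\wo} B_\smwo\,,\qquad i_{(\wo)} := \coprod_{\smwo \in W_\wo} i_\smwo\,,
\]
where $(i_\smwo \colon A_\smwo \to B_\smwo)\in\imds{\Classarr}$ is supplied by the specialness of the step at $\smwo$; closure of $\imds{\Classarr}$ under small sums yields $i_{(\wo)} \in \imds{\Classarr}$. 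Write $P_{(\wo)} := X_{\wo,0} \amalg_{X_{\minwellord,0}} X_{\minwellord,1}$, and combine the individual $a_\smwo$ and $b_\smwo$ with the tower's transition arrows to produce structure maps $A_{(\wo)} \to P_{(\wo)}$ and $b_{(\wo)} \colon B_{(\wo)} \to X_{\wo,1}$. The central claim, proved by transfinite induction on $\wo$, is that the square
\[
\xymatrix{
A_{(\wo)}\ar[r]^{i_{(\wo)}}\ar[d] & B_{(\wo)}\ar[d]^{b_{(\wo)}} \\
P_{(\wo)}\ar[r] & X_{\wo,1}
}
\]
is cocartesian.

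The base case $\wo = \minwellord$ is trivial ($A_{(\minwellord)} = B_{(\minwellord)} = \varnothing$, $P_{(\minwellord)} \simeq X_{\minwellord,1}$). At a successor $\wo = \Succ{\wo^-}$, the chain
\[
\begin{aligned}
P_{(\wo)} \amalg_{A_{(\wo)}} B_{(\wo)}
&\simeq \bigl(P_{(\wo)} \amalg_{A_{(\wo^-)}} B_{(\wo^-)}\bigr) \amalg_{A_\wo} B_\wo \\
&\simeq \bigl(X_{\wo,0} \amalg_{X_{\wo^-,0}} (P_{(\wo^-)} \amalg_{A_{(\wo^-)}} B_{(\wo^-)})\bigr) \amalg_{A_\wo} B_\wo \\
&\simeq \bigl(X_{\wo,0} \amalg_{X_{\wo^-,0}} X_{\wo^-,1}\bigr) \amalg_{A_\wo} B_\wo \simeq X_{\wo,1}
\end{aligned}
\]
uses, successively, distributivity of pushouts over coproducts, the associativity identity $P_{(\wo)} \simeq X_{\wo,0} \amalg_{X_{\wo^-,0}} P_{(\wo^-)}$, the inductive hypothesis at $\wo^-$, and the cocartesian square coming from the $\Classarr$-specialness of the step from $\wo^-$ to $\wo$. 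At a limit $\wo$, condition~(ii) gives $X_{\wo,\epsilon} \simeq \varinjlim_{\smwo < \wo} X_{\smwo,\epsilon}$ for $\epsilon \in \{0,1\}$; since small sums and pushouts commute with filtered colimits, each of $A_{(\wo)}, B_{(\wo)}, i_{(\wo)}, P_{(\wo)}$ is the filtered colimit of its counterparts at $\smwo < \wo$, and the square at $\wo$ is the filtered colimit of the (inductively cocartesian) squares at those $\smwo$, hence cocartesian. Finally, adjoin to $\wellord$ a formal top element $\infty$ with $X_{\infty,\epsilon} := \varinjlim_{\wo \in \wellord} X_{\wo,\epsilon}$, treated as a limit; applying the limit step at $\infty$ yields, via lemma~\ref{carspecial}, the $\Classarr$-specialness of the square in the statement. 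The main obstacle is the successor step: the individual pushout rewrites are elementary, but the iterated bookkeeping is delicate, and one could equivalently present it as a chain of instances of the pasting lemma at the start of this appendix.
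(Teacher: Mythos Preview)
Your proof is correct and follows essentially the same route as the paper: for (c), both accumulate the individual witnesses $i_\wo$ into a single coproduct and show by transfinite induction that the resulting square is cocartesian, with the successor step handled by pushout-pasting and the limit step by commutation of colimits. The only cosmetic difference is that you phrase the inductive invariant via the lemma~\ref{carspecial} reformulation (with $P_{(\wo)}=X_{\wo,0}\amalg_{X_{\minwellord,0}}X_{\minwellord,1}$), whereas the paper works directly with the defining square $\bigl(\coprod_{\smwo<\wo}A_\smwo\bigr)\amalg X_{\minwellord,0}\to\bigl(\coprod_{\smwo<\wo}B_\smwo\bigr)\amalg X_{\minwellord,1}$ over $X_{\wo,0}\to X_{\wo,1}$; as you yourself note, these are equivalent by the pasting lemma.
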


\begin{proof}
The assertions (\emph{a}) and (\emph{b}) are tautological. Let's prove (\emph{c}). Suppose given a functor $X:\wellord\to\Ar{\CC}$ as in (\emph{c}). By hypothesis~(\emph{i}), if $\Succ{\wo}$ is the successor of some $\wo$ in $\wellord$, there exists a commutative square
\[
\xymatrix{
&A_\wo\ar[r]^{i_\wo}\ar[d]
&B_\wo\ar[d]
\\
&X_{\Succ{\wo},0}\ar[r]
&X_{\Succ{\wo},1}
&\hskip -30pt,
}
\]
with $i_\wo$ in $\imds{\Classarr}$, such that the square
\[
\xymatrix{
A_\wo \amalg X_{\wo,0} \ar[r]^{}\ar[d]
&B_\wo\amalg X_{\wo,1}\ar[d]
\\
X_{\Succ{\wo},0}\ar[r]
&X_{\Succ{\wo},1}
}
\]
is cocartesian. For every $\wo$ in $\wellord$, \emph{choose} such a commutative square. We will prove by transfinite induction on $\wo$ that the ``evident'' commutative square
\[
\raise 20pt
\vbox{
\xymatrix{
\Bigl(\mathop{\amalg}\limits_{\smwo<\bgwo}A_\smwo\Bigr)\amalg X_{\minwellord,0}\ar[r]\ar[d]
&\Bigl(\mathop{\amalg}\limits_{\smwo<\bgwo}B_\smwo\Bigr)\amalg X_{\minwellord,1}\ar[d]
\\
X_{\wo,0}\ar[r]
&X_{\wo,1}
}
}
\leqno(*_\wo)
\]
is cocartesian, which will prove the assertion by stability of cocartesian squares under colimits. For $\wo=\minwellord$, there is nothing to prove. Suppose that the square $(*_\wo)$ is cocartesian for some $\wo$ in $\wellord$, and let $\Succ{\wo}$ be the successor of $\wo$. Consider the commutative diagram
\[
\xymatrix{
&\Bigl(\mathop{\amalg}\limits_{\smwo<\bgwo}A_\smwo\Bigr) \amalg A_\wo \amalg X_{\minwellord,0}\ar[r]\ar[d]
&\Bigl(\mathop{\amalg}\limits_{\smwo<\bgwo}B_\smwo\Bigr) \amalg B_\wo \amalg X_{\minwellord,1}\ar[d]
\\
&A_\wo\amalg X_{\wo,0}\ar[r]\ar[d]
&B_\wo\amalg X_{\wo,1}\ar[d]
\\
&X_{\Succ{\wo},0}\ar[r]
&X_{\Succ{\wo},1}
&\hskip -50pt.
}
\]
The upper square is cocartesian as sum of two cocartesian squares, and the lower one is cocartesian by hypothesis. So the composite square $(*_{\Succ{\wo}})$ is cocartesian. Finally if $\bgwo\neq\minwellord$ is not the successor of an element of $\wellord$, and if for every $\smwo<\bgwo$, $(*_\smwo)$~is cocartesian, then $(*_\bgwo)$ is cocartesian by stability of cocartesian squares under colimits.
\end{proof}

\begin{prop} \label{propclef}
Let $\CC$ be a cocomplete category and $\Setarr$ a (small) set of arrows of $\CC$. If the domains of the arrows in $\Setarr$ are finitely presentable, then \emph{(in the notations of~\ref{remlemmeclef})}
\[
\cell[\omega]{I}=\cell{I}\smsp.
\]
\end{prop}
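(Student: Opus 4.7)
The plan: I will prove the nontrivial inclusion $\cell{\Setarr}\subseteq\cell[\omega]{\Setarr}$ (the reverse is immediate). Given $f\colon X\to Y$ expressed as a transfinite composition $X=X_0\to X_1\to\cdots\to X_\lambda=Y$ of pushouts of small sums of arrows of $\Setarr$, the plan is to proceed by transfinite induction on $\lambda$, building an $\omega$-indexed decomposition by repeatedly exploiting the finite presentability of the domains of arrows of $\Setarr$.

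The cases $\lambda\leq\omega$ are trivial. The key step is the successor case $\lambda=\mu+1$. By the induction hypothesis, $X\to X_\mu$ admits an $\omega$-decomposition $X=Z_0\to Z_1\to\cdots$ with $\varinjlim Z_n\simeq X_\mu$, and the final pushout step $X_\mu\to Y$ attaches a sum $\coprod_\beta(A_\beta\to B_\beta)$ of arrows of $\Setarr$ along a map $\coprod_\beta A_\beta\to X_\mu$. Since each $A_\beta$ is finitely presentable and $X_\mu\simeq\varinjlim Z_n$ is a filtered colimit, every $A_\beta\to X_\mu$ factors through some $Z_{n(\beta)}$; I fix such factorizations and set $T_n=\{\beta:n(\beta)\leq n\}$. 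Defining $Z'_n$ to be the pushout attaching the cells indexed by $T_n$ to $Z_n$ along these factorizations, the step $Z'_n\to Z'_{n+1}$ merges the old step $Z_n\to Z_{n+1}$ with the attachment of new cells from $T_{n+1}\setminus T_n$. Stability of $\imds{\Setarr}$ under sums and pushouts makes each merged step a pushout of a sum of arrows of $\Setarr$, and a direct colimit calculation gives $\varinjlim Z'_n\simeq Y$.

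For a limit ordinal $\lambda$, the argument separates by cofinality. In the countable case, pick a cofinal $\omega$-sequence $(\mu_n)$ and apply the induction hypothesis to each segment $X_{\mu_n}\to X_{\mu_{n+1}}$ (which has length strictly less than $\lambda$), then splice the resulting $\omega$-decompositions by iterating the successor-step merging trick. In the uncountable-cofinality case — the main obstacle — finite presentability of every attaching domain still forces its attaching map to factor below some ordinal $<\lambda$, and one uses that the total collection of cells involved is a set to extract a countable cofinal skeleton of the relevant ordinal data, reducing to the countable case.

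The technical heart is therefore the successor step and the verification that the merged pushouts really are single pushouts of sums of arrows of $\Setarr$; the appendix lemmas \ref{carspecial} and \ref{proprspecial}(c) are tailored to this kind of bookkeeping, and an alternative, more compact route is to apply \ref{proprspecial}(c) to a carefully chosen functor $\lambda\to\Ar{\CC}$ whose transition squares are $\Setarr$-special thanks to the finite-presentability factorizations, and to extract the $\omega$-decomposition directly from the resulting colimit $\Setarr$-special square via \ref{carspecial}.
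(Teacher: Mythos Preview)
Your induction-on-$\lambda$ scheme has a real gap at limit ordinals. In the uncountable-cofinality case the sentence ``extract a countable cofinal skeleton of the relevant ordinal data'' cannot be made to work: an ordinal of uncountable cofinality has, by definition, no countable cofinal subset, and the collection of cells attached below $\lambda$ may itself be uncountable, so no smallness consideration produces such a skeleton. Even in the countable-cofinality case, ``splicing $\omega$ many $\omega$-decompositions by iterating the successor-step merging trick'' is underspecified: iterating your merge yields an $\omega$-indexed family of $\omega$-sequences $(Z^{(k)}_n)_n$, and you still owe the verification that each level map $\varinjlim_k Z^{(k)}_n\to\varinjlim_k Z^{(k)}_{n+1}$ lies in $\imds{\Setarr}$. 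That is precisely the $\Setarr$-special-square bookkeeping you have not carried out, and without it a filtered colimit of maps in $\imds{\Setarr}$ need not remain in $\imds{\Setarr}$.

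The paper sidesteps the limit-ordinal difficulty by \emph{not} inducting on $\lambda$. Given the transfinite composition $(X_\wo)_{\wo\in\wellord}$ (one cell attached at each successor), it constructs by transfinite recursion on $\wo\in\wellord$ a family of $\omega$-sequences $Y_\wo=(Y_{\wo,n})_n$ with $Y_{\wo,0}=X_\minwellord$, $\varinjlim_n Y_{\wo,n}\simeq X_\wo$, and with every successor square $(Y_{\wo,n}\to Y_{\wo,n+1})\to(Y_{\Succ\wo,n}\to Y_{\Succ\wo,n+1})$ $\Setarr$-special. At a successor this is your merging trick (finite presentability of $A_\wo$ pushes the new cell into a single level $n_0$), while at a limit one simply takes $Y_\wo:=\varinjlim_{\smwo<\wo}Y_\smwo$ --- no cofinality hypothesis is needed. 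Taking the colimit over all of $\wellord$ levelwise gives an $\omega$-sequence $Y_n=\varinjlim_\wo Y_{\wo,n}$; Lemma~\ref{proprspecial}(c), applied for each fixed $n$, shows the resulting column square is $\Setarr$-special, and then (b) gives $Y_n\to Y_{n+1}\in\imds{\Setarr}$. Your ``alternative, more compact route'' is close in spirit, but the functor must land in $\omega$-sequences (equivalently, be a functor $\wellord\times\omega\to\CC$), and Lemma~\ref{proprspecial}(c) is applied at each level $n$ separately; a single functor $\wellord\to\Ar{\CC}$ together with one application of Lemma~\ref{carspecial} only yields one $\Setarr$-special square, not an $\omega$-decomposition.
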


\begin{proof}
The inclusion
\[
\cell[\omega]{I}\subset\cell{I}\smsp
\]
being trivial, it's enough to prove the opposite inclusion. Recall that $\cell{I}$ is equal to the class of maps obtained by transfinite composition of pushouts of arrows in~$\Setarr$ (\cf~\ref{celmaps}). So, let $\wellord$ be a well-ordered set, with minimal element $\minwellord$, and
\[
(X_\wo)_{\wo\in\wellord}\smsp,\quad(X_{\smwo}\to X_\bgwo)_{\smwo\leqslant\bgwo}
\]
a $\wellord$-indexed inductive system with values in $\CC$ (functor $\wellord\to\CC$), satisfying the following two conditions:
\begin{itemize}
\item[\emph{a})] if $\Succ{\wo}$ is the successor of $\wo$ in $\wellord$, then there exists a cocartesian (pushout) square
\[
\xymatrix{
A_{\wo}\ar[r]^{a_{\wo}}\ar[d]_{i_\wo}
&X_\wo\ar[d]
\\
B_\wo\ar[r]_{b_\wo}
&X_{\Succ{\wo}}
}
\]
with $i_\wo$ in $\Setarr$;
\item[\emph{b})] if $\bgwo\neq\minwellord$ is not the successor of an element of $\wellord$, then $X_\bgwo\simeq\varinjlim_{\smwo<\bgwo}X_\smwo$.
\end{itemize}
We have to prove that the canonical map $X_\minwellord\to\varinjlim_{\wo\in\wellord}X_\wo$ is in $\cell[\omega]{\Setarr}$.
\smallbreak

In the sequel, we suppose that for every $\wo\in\wellord$ we have \emph{chosen} a cocartesian square as in~(\emph{a}).
\smallbreak

We will define by transfinite induction a $\wellord$\nobreakdash-indexed inductive system
\[
(Y_\wo)_{\wo\in\wellord}\smsp,\quad(Y_{\smwo}\to Y_\bgwo)_{\smwo\leqslant\bgwo}\smsp,
\]
where for every $\wo\in\wellord$,
\[
Y_\wo=\ \ Y_{\wo,0}\to Y_{\wo,1}\to Y_{\wo,2}\to\cdots\to Y_{\wo,n}\to Y_{\wo,n+1}\to\cdots
\]
is a sequence of maps in $\CC$, endowed with an isomorphism $\varinjlim_{n\in\mathbb{N}}Y_{\wo,n}\tosim X_\wo$, natural in $\wo$, satisfying the following conditions:
\begin{itemize}
\item[0)] for every $\wo\in\wellord$, $Y_{\wo,0}=X_\minwellord$ and $Y_{\smwo,0}\to Y_{\bgwo,0}$, $\smwo\leqslant\bgwo$, is the identity, and for every $n\in\mathbb{N}$, $Y_{\minwellord,n}\to Y_{\minwellord,n+1}$ is in $\imds{\Setarr}$;
\item[1)] if $\Succ{\wo}$ is the successor of $\wo$ in $\wellord$, then for every $n\in\mathbb{N}$,
\[
\xymatrix{
Y_{\wo,n}\ar[r]\ar[d]
&Y_{\wo,n+1}\ar[d]
\\
Y_{\Succ{\wo},n}\ar[r]
&Y_{\Succ{\wo},n+1}
}
\]
is a $\Setarr$-special square;
\item[2)] if $\bgwo\neq\minwellord$ is not the successor of an element of $\wellord$, then $Y_\bgwo\simeq\varinjlim_{\smwo<\bgwo}Y_\smwo$.
\end{itemize}
This will prove the proposition. Indeed, if we set
\[
Y\!:\,=\varinjlim_{\wo\in\wellord}Y_\wo=\ \Bigl(Y_0\!:\,=\varinjlim_{\wo\in\wellord}Y_{\wo,0}\toto 
                                              Y_1\!:\,=\varinjlim_{\wo\in\wellord}Y_{\wo,1}\toto 
                                              Y_2\!:\,=\varinjlim_{\wo\in\wellord}Y_{\wo,2}\toto\cdots\Bigr)\smsp,
\]
then by lemma~\ref{proprspecial}, (\emph{b}), (\emph{c}), for every $n\in\mathbb{N}$, $Y_n\to Y_{n+1}$ will be in $\imds{\Setarr}$ and therefore the canonical map
\[
X_\minwellord\simeq\varinjlim_{\wo\in\wellord}Y_{\wo,0}=Y_0
{\hskip -2.5pt\xymatrixcolsep{2.pc}\xymatrix{\ar[r]&}\hskip -2.5pt}
\varinjlim_{n\in\mathbb{N}}Y_n= \varinjlim_{n\in\mathbb{N}} \varinjlim_{\wo\in\wellord} Y_{\wo,n}
\simeq\varinjlim_{\wo\in\wellord}\varinjlim_{n\in\mathbb{N}}Y_{\wo,n}\simeq\varinjlim_{\wo\in\wellord}X_\wo 
\]
will be in $\cell[\omega]{\Setarr}$.
\smallbreak

So let's construct such an inductive system. Define $Y_\minwellord$ by
\[
Y_\minwellord\!:\,=\ 
\xymatrixcolsep{1.pc}
\xymatrix{
X_\minwellord\ar[r]^-{=}
&X_\minwellord\ar[r]^-{=}
&X_\minwellord\ar[r]
&\cdots\ar[r]
&X_\minwellord\ar[r]^-{=}
&X_\minwellord\ar[r]
&\cdots\smsp,
}
\]
\ssmash{$\varinjlim_{n\in\mathbb{N}}Y_{\minwellord,n}=\varinjlim_{n\in\mathbb{N}}X_{\minwellord}\tosim X_\minwellord$} being the evident isomorphism. If $\bgwo\neq\minwellord$ is not the successor of an element of $\wellord$, and if $Y_\smwo$ and the isomorphism \ssmash{$\varinjlim_{n\in\mathbb{N}}Y_{\smwo,n}\tosim X_\smwo$} are defined for $\smwo<\bgwo$, define the sequence $Y_\bgwo$ by \ssmash{$Y_\bgwo\!:\,=\varinjlim_{\smwo<\bgwo}Y_\smwo$}, and the isomorphism \ssmash{$\varinjlim_{n\in\mathbb{N}}Y_{\bgwo,n}\tosim X_\bgwo$} as the colimit of the isomorphisms \ssmash{$\varinjlim_{n\in\mathbb{N}}Y_{\smwo,n}\tosim X_\smwo$}. Suppose now that
\[
Y_\wo=\ \ Y_{\wo,0}\to Y_{\wo,1}\to Y_{\wo,2}\to\cdots\to Y_{\wo,n}\to Y_{\wo,n+1}\to\cdots\smsp,\quad
\varinjlim_{n\in\mathbb{N}}Y_{\wo,n}\tosim X_\wo
\]
are defined, and let's define the sequence $Y_{\Succ{\wo}}$, where $\Succ{\wo}$ is the successor of $\wo$, and construct the dotted part of the diagram
\[
\xymatrixrowsep{1.4pc}
\xymatrixcolsep{1.pc}
\xymatrix{
Y_{\wo,0} \ar[r] \ar@{..>}[d]
&Y_{\wo,1}\ar[r]\ar@{..>}[d]
&\cdots\ar[r]
&Y_{\wo,n}\ar[r]\ar@{..>}[d]
&Y_{\wo,n+1}\ar[r]\ar@{..>}[d]
&\cdots\smsp,
&\varinjlim_{n\in\mathbb{N}}Y_{\wo,n}\ar[r]^-{\sim}\ar@{..>}[d]
&X_\wo\ar[d]
\\
Y_{\Succ{\wo},0} \ar@{..>}[r]
&Y_{\Succ{\wo},1}\ar@{..>}[r]
&\cdots\ar@{..>}[r]
&Y_{\Succ{\wo},n}\ar@{..>}[r]
&Y_{\Succ{\wo},n+1}\ar@{..>}[r]
&\cdots\smsp,
&\varinjlim_{n\in\mathbb{N}}Y_{\Succ{\wo},n}\ar@{..>}[r]^-{\sim}
&X_{\Succ{\wo}}
&\hskip -29pt.
}
\]
Recall that we have a cocartesian (pushout) square
\[
\raise 20pt
\vbox{
\xymatrix{
A_{\wo}\ar[r]^{a_{\wo}}\ar[d]_{i_\wo}
&X_\wo\ar[d]
\\
B_\wo\ar[r]_{b_\wo}
&X_{\Succ{\wo}}
}
}
\leqno(*)
\]
with $i_\wo$ in $\Setarr$. As $A_\wo$ is of finite presentation, there exists $n_0\in\mathbb{N}$ such that $a_\wo$ factorizes through $Y_{\wo,n_0}$, and we can choose a minimal such $n_0$.
\[
\xymatrixcolsep{3pc}
\xymatrixrowsep{1.3pc}
\xymatrix{
&Y_{\wo,n_0}\ar[d]^{\mathrm{can}}
\\
A_\wo\ar[rd]_{a_\wo}\ar@{-->}[ru]
&\varinjlim Y_{\wo,n}\ar[d]^{\wr}
\\
&X_\wo
}
\]
For every $n\leqslant n_0$, set $Y_{\Succ{\wo},n}\!:\,=Y_{\wo,n}$, the arrow $Y_{\wo,n}\to Y_{\Succ{\wo},n}$ being the identity. The remaining part of the diagram is defined by constructing the following solid diagram of cocartesian (pushout) squares
\[
\xymatrix{
&A_\wo \ar[r] \ar[d]_{i_\wo}
&Y_{\wo,n_0}\ar[r]\ar[d]\ar@{-->}[rd]
&Y_{\wo,n_0+1}\ar[r]\ar[d]
&Y_{\wo,n_0+2}\ar[r]\ar[d]
&\cdots
\\
&B_\wo\ar[r]
&Z\ar[r]
&Y_{\Succ{\wo},n_0+1}\ar[r]
&Y_{\Succ{\wo},n_0+2}\ar[r]
&\cdots
&\hskip -30pt,
}
\]
the map $Y_{\Succ{\wo},n_0}=Y_{\wo,n_0}\to Y_{\Succ{\wo},n_0+1}$ being the above dotted arrow. The isomorphism $\varinjlim_{n\in\mathbb{N}}Y_{\Succ{\wo},n}\tosim X_{\Succ{\wo}}$ is deduced from the limit cocartesian square
\[
\xymatrixcolsep{1.8pc}
\xymatrix{
&A_\wo\ar[d]_{i_\wo}\ar[rr]
&&\varinjlim_{n>n_0}Y_{\wo,n}\simeq\varinjlim_{n\in\mathbb{N}}Y_{\wo,n}\ar@<-2em>[d]
\\
&B_\wo\ar[rr]
&&\varinjlim_{n>n_0}Y_{\Succ{\wo},n}\simeq\varinjlim_{n\in\mathbb{N}}Y_{\Succ{\wo},n}
&\hskip -30pt,
}
\]
the cocartesian square $(*)$, and the isomorphism $\varinjlim_{n\in\mathbb{N}}Y_{\wo,n}\tosim X_\wo$. It remains to prove that the squares
\[
\xymatrix{
Y_{\wo,n}\ar[r]\ar[d]
&Y_{\wo,n+1}\ar[d]
\\
Y_{\Succ{\wo},n}\ar[r]
&Y_{\Succ{\wo},n+1}
}
\]
are $\Setarr$\nobreakdash-special. For $n\neq n_0$ these squares are cocartesian, hence $\Setarr$\nobreakdash-special by 
lemma~\ref{proprspecial}, (\emph{a}). If $n=n_0$, this is a consequence of lemma~\ref{carspecial} and the above construction.
\end{proof}

The same proof with only minor changes shows the more general result stated in~\ref{remlemmeclef}.


\begin{thebibliography}{1}

\bibitem{Ara}
D.~Ara.
\newblock Sur les {$\infty$}-{groupo\"\i des} de {G}rothendieck.
\newblock Thesis, under the supervision of G.~Maltsiniotis, 2010,
  http://www.normalesup.org/\raise -3.3pt\vbox{\hbox{$\widetilde{ \
  }\,$}}ara/files/these.pdf.

\bibitem{Bat}
M.~A. Batanin.
\newblock Monoidal globular categories as natural environment for the theory of
  weak $n$-categories.
\newblock {\em Advances in Mathematics}, 136, pp.~39--103, 1998.

\bibitem{Ber}
C.~Berger.
\newblock A cellular nerve for higher categories.
\newblock {\em Advances in Mathematics}, 169, pp.~118--175, 2002.

\bibitem{Ci}
D.-C. Cisinski.
\newblock Batanin higher groupoids and homotopy types.
\newblock {\em Contemporary Mathematics}, 431, pp.~171--186, 2007.

\bibitem{PS}
A.~Grothendieck.
\newblock \emph{Pursuing stacks}.
\newblock Manuscript, 1983, to appear at \emph{Documents Math\'ematiques}.

\bibitem{Corr}
A.~Grothendieck.
\newblock {Letter to T. Porter, 28.06}.
\newblock \kern -2.5pt 83, to appear at \emph{Documents Math\'ematiques}.

\bibitem{Ma1}
G.~Maltsiniotis.
\newblock \emph{Infini groupo\"ides d'apr\`es Grothendieck}.
\newblock Preprint, 2007, www.math.jussieu.fr /\raise
  -3.3pt\vbox{\hbox{$\widetilde{ \ }\,$}}maltsin/textes.html.

\bibitem{Ma2}
G.~Maltsiniotis.
\newblock \emph{Infini\phantom{} cat\'egories non strictes, une nouvelle
  d\'efinition}.
\newblock Preprint, 2007, www.math.jussieu.fr/\raise
  -3.3pt\vbox{\hbox{$\widetilde{ \ }\,$}}maltsin/textes.html.

\end{thebibliography}
\end{document}